\title{The asymptotic behavior of Teichm\"uller rays}
\author{Masanori Amano}
\address{Department of Mathematics, Tokyo Institute of Technology, 2-12-1 Ookayama, Meguroku, Tokyo 152-8551, JAPAN}
\email{amano.m.ab@m.titech.ac.jp}
\subjclass[2000]{Primary~32G15, Secondary~30F60}
\keywords{Teichm\"uller space; Teichm\"uller distance; Teichm\"uller geodesic; augmented Teichm\"uller space}
\theoremstyle{plain}
\newtheorem{thm}{Theorem}[section]
\theoremstyle{definition}
\theoremstyle{plain}
\newtheorem{prop}[thm]{Proposition}
\theoremstyle{plain}
\newtheorem{lemma}[thm]{Lemma}
\theoremstyle{plain}
\newtheorem{cor}[thm]{Corollary}
\theoremstyle{remark}
\theoremstyle{remark}
\newtheorem*{rem}{\bf Remark}
\theoremstyle{plain}
\newtheorem{thm*}{Theorem}[section]
\theoremstyle{plain}
\newtheorem{prop*}[thm*]{Proposition}
\theoremstyle{plain}
\newtheorem{cor*}[thm*]{Corollary}
\DeclareMathOperator{\jac}{Jac}
\begin{document}

\begin{abstract}
In this paper, we consider the asymptotic behavior of two Teichm\"uller geodesic rays determined by Jenkins-Strebel differentials, and we obtain a generalization of a theorem in \cite{Amano14}.
We also consider the infimum of the asymptotic distance in shifting base points of the rays along the geodesics.
We show that the infimum is represented by two quantities.
One is the detour metric between the end points of the rays on the Gardiner-Masur boundary of the Teichm\"uller space, and the other is the Teichm\"uller distance between the end points of the rays on the augmented Teichm\"uller space.
\end{abstract}

\maketitle

\section{Introduction} \label{Intro}
Let $X$ be a Riemann surface of genus $g$ with $n$ punctures such that $3g-3+n>0$, and $T(X)$ be the Teichm\"uller space of $X$.
Any Teichm\"uller geodesic ray on $T(X)$ is determined by a holomorphic quadratic differential on an starting point of the ray.
A geodesic ray is called a Jenkins-Strebel ray if it is given by a Jenkins-Strebel differential.
In \cite{Amano14}, we obtain a condition for two Jenkins-Strebel rays to be asymptotic (Corollary 1.2 in \cite{Amano14}).
To obtain this condition, we use Theorem 1.1 in \cite{Amano14} which gives the explicit asymptotic value of the Teichm\"uller distance between two similar Jenkins-Strebel rays with the same end point in the augmented Teichm\"uller space.
In this paper, we improve this theorem, and obtain the asymptotic value of the distance between any two Jenkins-Strebel rays.

Let $r$, $r'$ be Jenkins-Strebel rays on $T(X)$ from $r(0)=[Y,f]$, $r(0)'=[Y',f']$ determined by Jenkins-Strebel differentials $q$, $q'$ with unit norm on $Y$, $Y'$ respectively.
It is known (cf.\cite{HerSch07}) that the Jenkins-Strebel rays $r$, $r'$ have limits, say $r(\infty )$, $r'(\infty )$, on the boundary of the augmented Teichm\"uller space $\hat T(X)$.
Suppose that $r$, $r'$ are similar, that is, there exist mutually disjoint simple closed curves $\gamma _1,\cdots ,\gamma _k$ on $X$ such that the set of homotopy classes of core curves of the annuli corresponding to $q$, $q'$ are represented by $f(\gamma _1),\cdots ,f(\gamma _k)$ on $Y$ and $f'(\gamma _1),\cdots ,f'(\gamma _k)$ on $Y'$ respectively.
We denote by $m_j$, $m'_j$ the moduli of the annuli on $Y$, $Y'$ with core curves homotopic to $f(\gamma _j)$, $f'(\gamma _j)$ respectively.
We can define the Teichm\"uller distance $d_{\hat T(X)}(r(\infty ),r'(\infty ))$ between the end points $r(\infty )$, $r'(\infty )$.

Our main result is the following:
\begin{thm} \label{main}
For any two Jenkins-Strebel rays $r$, $r'$,
	\begin{eqnarray}
	&&\lim _{t\rightarrow \infty }d_{T(X)}(r(t),r'(t))=\label{eq}\\
	&&\left\{\nonumber
	\begin{array}{l}
	\max \left\{\displaystyle \frac{1}{2}\log \max _{j=1,\cdots ,k}\left\{\frac{m'_j}{m_j},\frac{m_j}{m'_j}\right\}, d_{\hat T(X)}(r(\infty ),r'(\infty ))\right\}\\
	\hfill (${\rm if }$r$, $r'$ {\rm are similar}$)\\
	+\infty \ (${\rm otherwise}$) 
	\end{array}
	\right.
	\end{eqnarray}
\end{thm}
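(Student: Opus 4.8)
The plan is to bound $d_{T(X)}(r(t),r'(t))$ above and below by quantities having the same limit, using Kerckhoff's extremal-length formula for the lower bound and an explicit quasiconformal map for the upper bound. I use the Jenkins--Strebel normalization, under which the cylinder about $\gamma_j$ on $r(t)$ has modulus $m_je^{2t}$, so that $\ext_{r(t)}(\gamma_j)\sim e^{-2t}/m_j$ and $\ext_{r'(t)}(\gamma_j)\sim e^{-2t}/m'_j$, while the core curves are pinched and $r(t)$, $r'(t)$ converge in $\hat T(X)$ to $r(\infty)$, $r'(\infty)$ in the same boundary stratum. The dichotomy comes first. If $r$, $r'$ are not similar, then the two rays pinch different systems of curves on $X$, so some simple closed curve $\gamma$ is a core curve of exactly one of the two differentials; then $\ext(\gamma)\to 0$ along that ray while $\ext(\gamma)$ stays bounded below (or grows) along the other, and Kerckhoff's formula
\begin{equation*}
d_{T(X)}(p,q)=\tfrac12\log\sup_{\mu}\frac{\ext_q(\mu)}{\ext_p(\mu)}
\end{equation*}
forces $d_{T(X)}(r(t),r'(t))\to+\infty$, which is the second case.

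For the similar case I obtain the lower bound by feeding two families of curves into Kerckhoff's formula. Testing against a core curve $\gamma_j$ gives
\begin{equation*}
e^{2d_{T(X)}(r(t),r'(t))}\geq\frac{\ext_{r'(t)}(\gamma_j)}{\ext_{r(t)}(\gamma_j)}\longrightarrow\frac{m_j}{m'_j},
\end{equation*}
and since the formula is symmetric in $r$ and $r'$ this yields $\liminf_{t}d_{T(X)}(r(t),r'(t))\geq\tfrac12\log\max_{j}\{m_j/m'_j,m'_j/m_j\}$. Testing instead against curves $\mu$ disjoint from every core $\gamma_j$, hence descending to curves on the limiting nodal surfaces, the extremal lengths $\ext_{r(t)}(\mu)$ and $\ext_{r'(t)}(\mu)$ converge to the extremal lengths of $\mu$ on $r(\infty)$ and $r'(\infty)$; applying Kerckhoff's formula inside the boundary stratum, the supremum of the ratio over such $\mu$ tends to $e^{2d_{\hat T(X)}(r(\infty),r'(\infty))}$, so the $\liminf$ is also at least $d_{\hat T(X)}(r(\infty),r'(\infty))$. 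Combining the two families gives $\liminf_{t}d_{T(X)}(r(t),r'(t))\geq\max\{\tfrac12\log\max_{j}\{m_j/m'_j,m'_j/m_j\},\,d_{\hat T(X)}(r(\infty),r'(\infty))\}$.

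For the matching upper bound I construct, for each large $t$, a quasiconformal homeomorphism $r(t)\to r'(t)$ in the homotopy class determined by the markings, whose maximal dilatation tends to the right-hand side of \eqref{eq}. I decompose each surface into long collars around the curves $\gamma_j$ and complementary regions that converge, as $t\to\infty$, to the components of the nodal surfaces $r(\infty)$, $r'(\infty)$. On the complementary regions I take a map close to the Teichm\"uller map realizing $d_{\hat T(X)}(r(\infty),r'(\infty))$, whose dilatation is at most $e^{2d_{\hat T(X)}(r(\infty),r'(\infty))}+o(1)$. On the $j$-th collar I take the affine map sending core to core; because the two collars have moduli differing only by the factor $m_j/m'_j$, the stretching $e^{2t}$ cancels and the affine dilatation is $\max\{m_j/m'_j,m'_j/m_j\}$ up to the relative twist. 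Since the maximal dilatation of the glued map is the maximum of the dilatations of the pieces, this gives $\limsup_{t}d_{T(X)}(r(t),r'(t))\leq\max\{\tfrac12\log\max_{j}\{m_j/m'_j,m'_j/m_j\},\,d_{\hat T(X)}(r(\infty),r'(\infty))\}$, closing the argument.

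The main difficulty is the upper bound, specifically making the glued map genuinely quasiconformal with dilatation equal to the \emph{maximum} of the two contributions rather than their sum. Two estimates carry the weight. First, the affine collar maps and the near-extremal map on the complementary regions need not agree on the collar boundaries; one interpolates across a sub-collar, and because each collar has modulus $m_j e^{2t}\to\infty$ any bounded boundary discrepancy can be absorbed at a dilatation cost of $1+o(1)$. Second, the relative twisting of the two collars about $\gamma_j$ enters the affine map as a shear; since the Jenkins--Strebel deformation is a pure stretch the number of twists stays bounded in $t$, and a bounded twist spread along a collar of modulus $m_j e^{2t}\to\infty$ again contributes dilatation $1+o(1)$. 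Verifying that the twist and the gluing are asymptotically invisible, so that the two contributions combine as a maximum, is the technical heart of the proof.
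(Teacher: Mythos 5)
Your outline coincides with the paper's: the lower bound you extract from Kerckhoff's formula (core curves for the moduli term, curves disjoint from the cores for the stratum term) is exactly the content of the two propositions the paper imports from \cite{Amano14} and \cite{Masur75}, and your upper bound follows the same thick-part/collar decomposition that the paper uses. The gap is in the step you yourself flag as the technical heart: the claim that ``any bounded boundary discrepancy can be absorbed at a dilatation cost of $1+o(1)$'' across a collar of modulus $m_je^{2t}\to \infty$ is false in general. The discrepancy in question is between the boundary values, on a circle shrinking toward the node, of the near-extremal Teichm\"uller map $h$ realizing $d_{\hat T(X)}(r(\infty ),r'(\infty ))$ and those of your affine collar map. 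The Beltrami coefficient of $h$ has constant absolute value $(K-1)/(K+1)$ all the way up to the node (its quadratic differential has at worst a simple pole there), so the restriction of $h$ to small circles about the node is boundedly non-conformal at every scale; it is not $(1+o(1))$-quasisymmetric relative to an affine map. A fixed non-conformal circle discrepancy cannot be interpolated away across a long annulus at cost tending to $1$: if $G_t$ on $\{\rho _t\leq |z|\leq 1\}$ satisfied $G_t|_{|z|=1}=\phi $, $G_t|_{|z|=\rho _t}$ a rotation, and $K(G_t)\rightarrow 1$ with $\rho _t\rightarrow 0$, then reflecting repeatedly across the inner circle and applying normal families would yield a conformal self-map of the punctured disk fixing $0$ with boundary values $\phi $, forcing $\phi $ to be a rotation. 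So the dilatation of your glued map need not converge to the \emph{maximum} of the two contributions; the interpolation region can contribute a fixed extra factor that survives as $t\rightarrow \infty $.

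The missing idea, which is the paper's main technical input, is Lemma \ref{lemma} (from \cite{FarMas10}, reproved in the paper via Lemma \ref{qs} and a lemma of \cite{Gupta11}): the Teichm\"uller map between the nodal limit surfaces can be replaced, at dilatation cost $K+O(\varepsilon )$, by a map that is \emph{conformal in a neighborhood of the nodes}. Only after this step does $h$ admit an expansion $h_j^l(z)=c_j^lz+O(z^2)$ near each node, so that on circles $|z|=\Delta _j(t)\rightarrow 0$ the discrepancy from the linear map $c_j^lz$ is $(1+o(1))$-quasisymmetric; the paper then interpolates to $c_j^lz$ at cost tending to $1$ (the maps $Q_{j,t}^l$) and absorbs the modulus mismatch and the rotation $\arg c_j^l$ in the remaining subcollar at cost tending to a value below $M_j+\varepsilon '$ (the maps $P_{j,t}^l$), finally checking $|\arg c_j^1+\arg c_j^2|<2\pi $ so that the glued map lies in the correct homotopy class. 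Your remark that a bounded twist spread over a collar of diverging modulus costs $1+o(1)$ is correct, and your nonsimilar case and moduli lower bound are sound (the stratum lower bound tacitly uses continuity of extremal length at the augmented boundary, which the paper sidesteps by quoting \cite{Masur75}); but without the conformality-near-the-nodes lemma your upper bound does not close.
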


\begin{cor} \label{cor}
If $r$, $r'$ are similar, the minimum value of the equation {\rm (\ref{eq})} when we shift the starting points of $r$, $r'$ is given by
	\begin{eqnarray}
	\max \left\{\frac{1}{2}\delta , d_{\hat T(X)}(r(\infty ),r'(\infty ))\right\},\nonumber
	\end{eqnarray}
where $\displaystyle \delta =\frac{1}{2}\log \max _{j=1,\cdots ,k}\frac{m'_j}{m_j}+\frac{1}{2}\log \max _{j=1,\cdots ,k}\frac{m_j}{m'_j}$.
\end{cor}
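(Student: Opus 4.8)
The plan is to reduce the minimization to a one–variable optimization by first recording how shifting the base points transforms the data entering Theorem \ref{main}. Shifting the starting point of $r$ from $r(0)$ to $r(s)$ replaces the base point by another point on the same geodesic, and the Teichm\"uller map realizing this shift is a uniform affine stretch of the flat $|q|$–structure in the natural coordinates of $q$: it contracts the closed (core) trajectories and expands the transverse direction by the reciprocal factor. Since this acts on every cylinder simultaneously, it multiplies all of the moduli $m_1,\dots,m_k$ by one and the same factor $e^{2s}$, and it preserves the $|q|$–area, so the Jenkins--Strebel differential at $r(s)$ still has unit norm. The analogous statement holds for $r'$ with a factor $e^{2s'}$. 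First I would assemble these facts together with the observation that the shifted rays are sub-rays of the original geodesics, hence still similar, with the same end points $r(\infty)$, $r'(\infty)$ on $\hat T(X)$; in particular the quantity $d_{\hat T(X)}(r(\infty),r'(\infty))$ is invariant under the shift.

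Granting this, I would apply Theorem \ref{main} to the shifted rays. Writing $\mu=e^{2(s'-s)}$, each ratio $m'_j/m_j$ becomes $\mu\,m'_j/m_j$, so the modulus term in (\ref{eq}) turns into
\[
\tfrac12\log\max\Bigl\{\mu R,\ \mu^{-1}S\Bigr\},
\qquad R=\max_{j}\frac{m'_j}{m_j},\quad S=\max_{j}\frac{m_j}{m'_j},
\]
while the second term remains the constant $C=d_{\hat T(X)}(r(\infty),r'(\infty))$. As $s,s'$ range over the admissible shifts, $\mu$ ranges over all of $(0,\infty)$, so the problem becomes the minimization of $\max\{\tfrac12\log\max\{\mu R,\mu^{-1}S\},\,C\}$ over $\mu>0$.

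The optimization is then elementary. The map $\mu\mapsto\mu R$ is increasing and $\mu\mapsto\mu^{-1}S$ is decreasing, so their maximum is minimized where the two coincide, namely at $\mu=\sqrt{S/R}$, where both equal $\sqrt{RS}$. Hence the infimum of the modulus term is $\tfrac12\log\sqrt{RS}=\tfrac14\log(RS)=\tfrac12\delta$, using $\delta=\tfrac12\log R+\tfrac12\log S$. Since $C$ does not depend on $\mu$ and the modulus term attains its infimum, the elementary identity $\min_\mu\max\{G(\mu),C\}=\max\{\min_\mu G(\mu),C\}$ gives the claimed value $\max\{\tfrac12\delta,\,d_{\hat T(X)}(r(\infty),r'(\infty))\}$.

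The main obstacle I anticipate is not the optimization but the first step: verifying precisely that a shift scales every modulus by one and the same factor while leaving the augmented end point fixed. This amounts to tracking the action of the Teichm\"uller geodesic flow on the horizontal cylinder decomposition of the Jenkins--Strebel differential and checking the normalizations (unit norm and the direction of the ray) so that Theorem \ref{main} applies verbatim to the shifted configuration; once this bookkeeping is in place, the remainder is formal.
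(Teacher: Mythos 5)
Your proposal is correct and takes essentially the same route as the paper: the paper likewise uses that $\tfrac{1}{2}\delta$ and $d_{\hat T(X)}(r(\infty ),r'(\infty ))$ are invariant under shifting, that the modulus term always dominates $\tfrac{1}{2}\delta$, and then exhibits the optimal shift $\alpha =\tfrac{1}{4}\log \bigl(\max _j(m_j/m'_j)/\max _j(m'_j/m_j)\bigr)$, which is exactly your $\mu =\sqrt{S/R}$ written as $e^{2\alpha }$. The only cosmetic difference is that you organize the final step as an explicit one-variable min--max optimization in $\mu$, whereas the paper verifies equality at that $\alpha$ by direct computation.
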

\begin{rem}
The quantity $\delta $ is known as the {\it detour metric} between end points of the rays $r$, $r'$ in the Gardiner-Masur boundary of $T(X)$.
We refer to \cite{Walsh12}, and also \cite{Amano14}.
\end{rem}

\section{Preliminaries}
\subsection{Teichm\"uller spaces}
Let $X$ be an analytically finite Riemann surface which has genus $g$ and $n$ punctures, briefly, we say it is of type $(g,n)$.
We assume that $3g-3+n>0$.
Let $T(X)$ be the {\it Teichm\"uller space} of $X$.
It is the set of equivalence classes of pairs of a Riemann surface $Y$ and a quasiconformal mapping $f:X\rightarrow Y$.
Two pairs $(Y,f)$ and $(Y',f')$ are equivalent if there is a conformal mapping $h:Y\rightarrow Y'$ such that $h\circ f$ is homotopic to $f'$.
We denote by $[Y,f]$ the equivalence class of a pair $(Y,f)$.
The {\it Teichm\"uller distance} $d_{T(X)}$ is a complete distance on $T(X)$ which is defined by the following.
For any $[Y,f]$, $[Y',f']$ in $T(X)$, $d_{T(X)}([Y,f],[Y',f'])=\frac{1}{2}\log \inf K(h)$, where the infimum ranges over all quasiconformal mappings $h:Y\rightarrow Y'$ such that $h\circ f$ is homotopic to $f'$, and $K(h)$ is the maximal quasiconformal dilatation of $h$.

\subsection{Holomorphic quadratic differentials}
A {\it holomorphic quadratic differential} $q$ on $X$ is a tensor of the form $q(z)dz^2$ where $q(z)$ is a holomorphic function of a local coordinate $z$ on $X$.
For any $q\not =0$, a zero of $q$ or a puncture of $X$ is called a {\it critical point} of $q$.
Then, $q$ has finitely many critical points.
We allow $q$ to have poles of order $1$ at punctures of $X$.
Then, the $L^1$-norm $\|q\|=\iint _{X}|q|$ is finite, where $|q|=|q(z)|dxdy$.
If $\|q\|=1$, we call $q$ of unit norm.

Let $p_0$ be a non-critical point of $q$ and $U$ be a small neighborhood of $p_0$ which does not contain any other critical points of $q$.
For any point $p$ in $U$, we can define a new local coordinate $\zeta (p)=\int _{z(p_0)}^{z(p)}q(z)^{\frac{1}{2}}dz$ on $X$ where $z$ is a local coordinate on $U$.
The coordinate $\zeta $ is called a {\it $q$-coordinate}.
By $q$-coordinates, we see that $q=d\zeta ^2$, and in a common neighborhood of two $q$-coordinates $\zeta _1$, $\zeta _2$, the equation $\zeta _2=\pm \zeta _1+$constant holds.

Suppose that $p_0$ is a critical point of $q$, and its order is $n\geq -1$.
In a small neighborhood of $p_0$ which does not contain any other critical points of $q$, there exists a local coordinate $z$ on $X$ such that $z(p_0)=0$ and $q=z^ndz^2$.
For instance, we refer to \cite{Strebel84}.
For any non-critical point in the neighborhood of $p_0$, there exists a $q$-coordinate $\zeta $.
By $d\zeta ^2=z^ndz^2$, the transformation $\zeta =\frac{2}{n+2}z^{\frac{n+2}{2}}$ holds.
For any $k=0,\cdots ,n+1$, the set $\{ \frac{2\pi k}{n+2}\leq \arg z\leq \frac{2\pi (k+1)}{n+2}\}$ on the $z$-plane is mapped to the half plane $\{ 0\leq \arg \zeta \leq \pi \}$ or $\{ \pi \leq \arg \zeta \leq 2\pi \}$ on the $\zeta $-plane.
We can see the trajectory flow in the neighborhood of $p_0$ as the $n+2$ copies of the half plane with the gluing along each horizontal edge of the planes (Figure \ref{nbd0}).
	\begin{figure}[!ht]
	\begin{center}
	\includegraphics[keepaspectratio, scale=0.6]
	{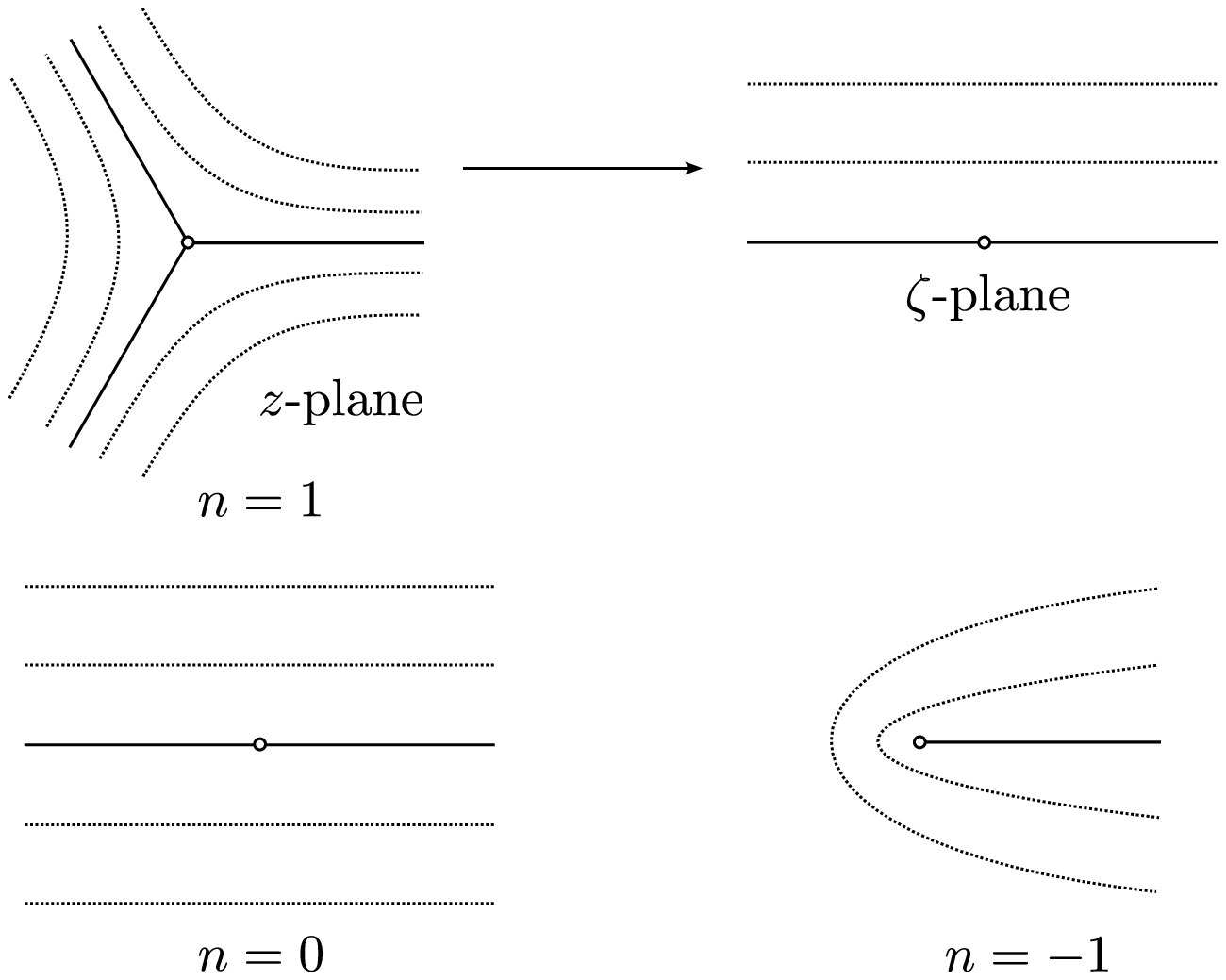}
	\caption{The trajectory flow in the neighborhood of $p_0$ in the case of $n=1,0,-1$}
	\label{nbd0}
	\end{center}
	\end{figure}

A {\it horizontal trajectory} of $q$ is a maximal smooth arc $z=\gamma (t)$ on $X$ which satisfies $q(\gamma (t))(\frac{d\gamma (t)}{dt})^2>0$.
By definition, horizontal trajectories of $q$ does not contain critical points of $q$.
All horizontal trajectories of $q$ are Euclidean horizontal arcs in $q$-coordinates, moreover, by the form of transformations of $q$-coordinates, ``horizontal directions" are preserved.
A {\it saddle connection} of $q$ is a horizontal trajectory which joins critical points of $q$.
We denote by $\Gamma _q$ the set of all critical points of $q$ and all saddle connections of $q$.
Any component of $X-\Gamma _q$ is classified to the following two cases.
	\begin{itemize}
	\item {\it Annulus}: It is an annulus which is swept out by simple closed horizontal trajectories of $q$.
	These are free homotopic to each other.
	We call the simple closed horizontal trajectories the {\it core curves} of the annulus.
	\item {\it Minimal domain}: This domain is generated by infinitely many recurrent horizontal trajectories which are dense in the domain.
	\end{itemize}
Since $q$ has finitely many critical points, the number of components of $X-\Gamma _q$ is finite.
If $X-\Gamma _q$ has only annuli, we call $q$ a {\it Jenkins-Strebel differential}.

\subsection{Teichm\"uller geodesic rays} \label{geodesic}
For any holomorphic quadratic differential $q\not =0$ on $X$, a quasiconformal mapping $f:X\rightarrow Y$ whose Beltrami coefficient is of the form $\mu _f=-\frac{K(f)-1}{K(f)+1}\frac{\bar q}{|q|}$ is called the {\it Teichm\"uller mapping}.
For any quasiconformal mapping $g:X\rightarrow Y$, there exists a Teichm\"uller mapping $f:X\rightarrow Y$ which is homotopic to $g$.
Furthermore, the Teichm\"uller mapping satisfies $K(f)\leq K(g)$ where the equality holds if and only if $f=g$.
\begin{rem}
More generally, If $X$ and $Y$ have same genus and punctures, for any orientation preserving homeomorphism $g:X\rightarrow Y$, there exists a Teichm\"uller mapping $f:X\rightarrow Y$ which is homotopic to $g$ (Theorem 1 in \S 1.5 of Chapter II of \cite{Abikoff80}).
\end{rem}
Let $f:X\rightarrow Y$ be a Teichm\"uller mapping and $q$ be the associated unit norm holomorphic quadratic differential on $X$.
In this situation, there exists a unit norm holomorphic quadratic differential $\varphi $ on $Y$ such that $f$ maps each zero of order $n$ of $q$ to a zero of order $n$ of $\varphi $, and is represented by $w\circ f\circ z^{-1}(z)=K(f)^{-\frac{1}{2}}x+iK(f)^{\frac{1}{2}}y$ where $z=x+iy$ and $w$ are $q$ and $\varphi $-coordinates respectively.
Such $\varphi $ is uniquely determined.
For more details of the discussion, we refer the reader to \cite{ImaTan92}.

Let $p=[Y,f]$, $q\not =0$ be a unit norm holomorphic quadratic differential on $Y$, and $z$ be any $q$-coordinate.
The mapping $r:\mathbb R_{\geq 0}\rightarrow T(X)$ is called a {\it Teichm\"uller geodesic ray from $p$ determined by $q$} if for any $t\geq 0$, we assign a point $[Y_t,g_t\circ f]$ in $T(X)$ to $r(t)$ where $g_t$ is a Teichm\"uller mapping on $Y$ which is of the form $z=x+iy\mapsto z_t=e^{-t}x+ie^ty$, and $Y_t$ is a Riemann surface which is determined by the coordinates $z_t$.
We assume that $g_0=id_Y$ and $Y_0=Y$.
By properties of Teichm\"uller mappings, we have $d_{T(X)}(r(s),r(t))=|s-t|$ for any $s,t\geq 0$.
If $q$ is Jenkins-Strebel, we call $r$ a {\it Jenkins-Strebel ray}.

Let $r$, $r'$ be any two Jenkins-Strebel rays on $T(X)$ from $r(0)=[Y,f]$, $r'(0)=[Y',f']$ determined by Jenkins-Strebel differentials $q$, $q'$ with unit norm on $Y$, $Y'$ respectively.
The rays $r$, $r'$ are {\it similar} if there exist mutually disjoint simple closed curves $\gamma _1,\cdots ,\gamma _k$ on $X$ such that the set of homotopy classes of core curves of the annuli corresponding to $q$, $q'$ are represented by $f(\gamma _1),\cdots ,f(\gamma _k)$ on $Y$ and $f'(\gamma _1),\cdots ,f'(\gamma _k)$ on $Y'$ respectively.

\subsection{Augmented Teichm\"uller spaces}
We refer to \cite{HerSch07} and \cite{ImaTan92} for augmented Teichm\"uller spaces.
Let $R$ be a connected Hausdorff space which satisfies following conditions:
	\begin{itemize}
	\item Any $p\in R$ has a neighborhood which is homeomorphic to the unit disk $\mathbb D=\{|z|<1\}$ or the set $\{(z_1,z_2)\in {\mathbb C}^2 |\ |z_1|<1, |z_2|<1, z_1\cdot z_2=0\}$. (In the latter case, $p$ is called a {\it node} of $R$.)
	\item Let $p_1,\cdots ,p_k$ be nodes of $R$.
	We denote by $R_1,\cdots ,R_r$ the connected components of $R-\{p_1,\cdots ,p_k\}$.
	For any $i=1,\cdots ,r$, each $R_i$ is a Riemann surface of type $(g_i,n_i)$ which satisfies $2g_i-2+n_i>0$, $n=\sum _{i=1}^rn_i-2k$ and $g=\sum _{i=1}^rg_i-r+k+1$.
	\end{itemize}
We call $R$ the {\it Riemann surface of type $(g,n)$ with nodes}.

The {\it augmented Teichm\"uller space} $\hat T(X)$ is the set of equivalence classes of pairs of a Riemann surface of type $(g,n)$ with or without nodes $R$ and a deformation $f:X\rightarrow R$.
The deformation $f$ is a continuous mapping such that some disjoint loops on $X$ are contracted to nodes of $R$, and is homeomorphic except to these loops. 
Two pairs $(R,f)$ and $(R',f')$ are equivalent if there is a conformal mapping $h:R\rightarrow R'$ such that $h\circ f$ is homotopic to $f'$, where the conformal mapping means that each restricted mapping of a component of $R-\{$nodes of $R\}$ onto a component of $R'-\{$nodes of $R'\}$ is conformal.
Obviously, $T(X)$ is included in $\hat T(X)$.
A topology of $\hat T(X)$ is induced by the following.
Let $[R,f]$ in $\hat T(X)$.
For any compact neighborhood $V$ of the set of nodes of $R$ and any $\varepsilon >0$, a neighborhood $U_{V,\varepsilon }$ of $[R,f]$ is defined by the set of $[S,g]$ in $\hat T(X)$ such that there is a deformation $h:S\rightarrow R$ which is $(1+\varepsilon )$-quasiconformal on $h^{-1}(R-V)$ such that $f$ is homotopic to $h\circ g$.

\subsection{The end points of Jenkins-Strebel rays}
We consider the end points of Jenkins-Strebel rays.
In the following discussion, we use the detailed description in \S 4.1 of \cite{HerSch07}.
Let $r$ be a Jenkins-Strebel ray on $T(X)$ from $r(0)=[Y,f]$ determined by a Jenkins-Strebel differential $q$ with unit norm on $Y$.
All components of $Y-\Gamma _q$ are represented by rectangles $C_1,\cdots ,C_k$ with identifications of vertical edges of them in $q$-coordinates.
Let $m_1,\cdots ,m_k$ be moduli of $C_1,\cdots ,C_k$ respectively.
We cut off each rectangle in the half height, and the resulting half rectangle $C_j^l$ is mapped conformally to the annulus $A_j^l(0)=\{ e^{-m_j\pi }\leq |z|<1\}$ for any $j=1,\cdots ,k$ and $l=1,2$.
Then, we can assume that the original surface $Y$ is constructed by $\{\overline{A_j^l(0)}\}_{j=1,\cdots ,k}^{l=1,2}$ with gluing mappings which are determined naturally.
Let $r(t)=[Y_t,g_t\circ f]$ be the representation of $r$ for any $t\geq 0$.
The Teichm\"uller mapping $g_t$ is represented by $z=re^{i\theta }\mapsto r^{e^{2t}}e^{i\theta }$ on each $A_j^l(0)$.
We set $A_j^l(t)=\{e^{-e^{2t}m_j\pi }\leq |z|<1\}$ for any $j=1,\cdots ,k$, $l=1,2$, and $t\geq 0$, then $Y_t$ is constructed by them as in the case of $t=0$.
In this representation, we can set $A_j^l(\infty )$ as the unit disk $\mathbb D=\{|z|<1\}$ for any $j=1,\cdots ,k$ and $l=1,2$.
We obtain the Riemann surface with nodes $Y_{\infty }$ by $\{\overline{A_j^l(\infty )}\}_{j=1,\cdots ,k}^{l=1,2}$ with the similar gluing mappings as in the case of $t\geq 0$.
The deformation $g_{\infty }:Y\rightarrow Y_{\infty }$ is obtained by $z=re^{i\theta }\mapsto h_j(r)e^{i\theta }$ on $A_j^l(\infty )$ where $h_j:[e^{-m_j\pi },1)\rightarrow [0,1)$ is an arbitrary monotonously increasing diffeomorphism for any $j=1,\cdots ,k$ and $l=1,2$.
The homotopy class of $g_{\infty }$ is independent of the choices of $h_j$ for any $j=1,\cdots ,k$.

\begin{prop}{\rm (cf. \cite{HerSch07})}
The Jenkins-Strebel ray $r(t)=[Y_t,g_t\circ f]$ on $T(X)$ converges to a point $r(\infty )=[Y_{\infty },g_{\infty }\circ f]$ in $\hat T(X)$ as $t\rightarrow \infty $.
\end{prop}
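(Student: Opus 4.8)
The plan is to verify convergence directly from the definition of the topology on $\hat T(X)$. Fix a compact neighborhood $V$ of the set of nodes of $Y_\infty$ and an $\varepsilon>0$; it then suffices to produce a threshold $T$ so that for every $t\ge T$ the point $r(t)=[Y_t,g_t\circ f]$ lies in $U_{V,\varepsilon}$. By the definition of $U_{V,\varepsilon}$, this amounts to constructing a deformation $h_t\colon Y_t\to Y_\infty$ that is $(1+\varepsilon)$-quasiconformal on $h_t^{-1}(Y_\infty-V)$ and whose homotopy class satisfies $g_\infty\circ f\simeq h_t\circ (g_t\circ f)$. Since $f$ is a fixed homeomorphism common to all three surfaces, the homotopy condition reduces to $g_\infty\simeq h_t\circ g_t$ on $Y$, so the whole problem can be read off from the explicit radial models of $Y_t$ and $Y_\infty$ described above.

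First I would build $h_t$ annulus by annulus. On each $A_j^l(t)=\{e^{-e^{2t}m_j\pi}\le |z|<1\}$ I set $h_t(\rho e^{i\theta})=\psi_{j,t}(\rho)e^{i\theta}$, where $\psi_{j,t}\colon [e^{-e^{2t}m_j\pi},1)\to[0,1)$ is a monotone increasing diffeomorphism with $\psi_{j,t}(e^{-e^{2t}m_j\pi})=0$ and $\psi_{j,t}(\rho)=\rho$ for $\rho\ge\delta_j$; here the radii $\delta_j\in(0,1)$ are chosen small enough that the node-disks $\{|z|<\delta_j\}$ of $Y_\infty$ are contained in $V$, which is possible because $V$ is a neighborhood of the nodes. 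These radial maps preserve the angular coordinate and coincide near $|z|=1$, so they are compatible with the gluing mappings and patch together into a deformation $h_t\colon Y_t\to Y_\infty$ that collapses each inner boundary circle to the corresponding node. Choosing $T$ so large that $e^{-e^{2T}m_j\pi}<\delta_j$ for all $j$, the region $\{\delta_j\le |z|<1\}$ is contained in $A_j^l(t)$ for $t\ge T$ and $h_t$ is the identity there. Hence $h_t$ is conformal, in particular $(1+\varepsilon)$-quasiconformal, on $h_t^{-1}(Y_\infty-V)\subseteq\bigcup_{j,l}\{\delta_j\le|z|<1\}$, as required.

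It then remains to check the homotopy condition, which I expect to be the main point requiring care. Restricting $h_t\circ g_t$ to a sheet $A_j^l(0)$ and using $g_t(re^{i\theta})=r^{e^{2t}}e^{i\theta}$, one computes $(h_t\circ g_t)(re^{i\theta})=\psi_{j,t}(r^{e^{2t}})e^{i\theta}$. The function $r\mapsto\psi_{j,t}(r^{e^{2t}})$ is again a monotone increasing diffeomorphism $[e^{-m_j\pi},1)\to[0,1)$ sending $e^{-m_j\pi}$ to $0$, that is, it is exactly a map of the type admitted for the $h_j$ defining $g_\infty$. The straight-line homotopy between this function and $h_j$ stays within this class, since a convex combination of increasing diffeomorphisms with the same endpoint behavior is again one, and because all maps involved are radial and agree with the gluings near $|z|=1$, the homotopy descends to $Y_\infty$. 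By the stated independence of the homotopy class of $g_\infty$ on the choice of the $h_j$, it follows that $h_t\circ g_t\simeq g_\infty$.

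Combining the two steps, for every compact neighborhood $V$ of the nodes and every $\varepsilon>0$ there is a $T$ with $r(t)\in U_{V,\varepsilon}$ for all $t\ge T$, which is precisely the assertion that $r(t)\to r(\infty)$ in $\hat T(X)$. The genuinely delicate part is the bookkeeping of the gluing combinatorics: one must make sure that the two half-disks $A_j^1(\infty)$ and $A_j^2(\infty)$ meeting at a common node are collapsed compatibly, so that $h_t$ is a well-defined deformation and the homotopy of $h_t\circ g_t$ to $g_\infty$ respects the identifications. The quasiconformal estimate itself is immediate once $h_t$ is taken to be the identity away from $V$.
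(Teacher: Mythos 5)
Your proof is correct and takes essentially the same route the paper intends: the paper states this proposition with only a citation to \cite{HerSch07}, but the annular model of $Y_t$ and $Y_\infty$ and the stated independence of the homotopy class of $g_\infty$ from the choice of the $h_j$ are exactly the ingredients of that argument, and your radial maps $h_t$ (identity for $|z|\geq \delta _j$, hence conformal off $V$ since the gluings along $|z|=1$ are the same for all $t$ including $t=\infty $) together with the observation that $h_t\circ g_t$ is itself an admissible $g_\infty $ constitute the standard verification against the definition of the topology on $\hat T(X)$. I see no gap: the straight-line homotopy stays within the class of admissible radial collapsing maps and is stationary on the glued outer circles, so both the quasiconformal bound and the homotopy condition hold as you claim.
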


Suppose that $r$, $r'$ are similar Jenkins-Strebel rays on $T(X)$ from $r(0)=[Y,f]$, $r'(0)=[Y',f']$ determined by Jenkins-Strebel differentials $q$, $q'$ with unit norm on $Y$, $Y'$ respectively.
Let $\gamma _1,\cdots ,\gamma _k$ be as in the definition of ``similar" in \S \ref{geodesic}.
There is a homeomorphism $\alpha :X-f^{-1}(\Gamma _q)\rightarrow X-f'^{-1}(\Gamma _{q'})$ which is homotopic to the identity such that the mapping $f'\circ \alpha \circ f^{-1}$ maps the core curves of the annuli corresponding to $f(\gamma _j)$ to the core curves of the annuli corresponding to $f'(\gamma _j)$ for any $j=1,\cdots ,k$.
We set $r(\infty )=[Y_{\infty },g_{\infty }\circ f]$, $r'(\infty )=[Y'_{\infty },g'_{\infty }\circ f']$ and let $\{Y_{\infty ,\lambda }\}_{\lambda =1,\cdots ,\Lambda }$, $\{Y'_{\infty ,\lambda }\}_{\lambda =1,\cdots ,\Lambda }$ be the components of $Y_{\infty }-\{$nodes of $Y_{\infty }\}$, $Y'_{\infty }-\{$nodes of $Y'_{\infty }\}$ respectively, such that $(g'_{\infty }\circ f')\circ \alpha \circ (g_{\infty }\circ f)^{-1}(Y_{\infty ,\lambda})=Y'_{\infty ,\lambda}$ for any $\lambda =1,\cdots ,\Lambda $.
We define the Teichm\"uller distance between $r(\infty )$, $r'(\infty )$ by
	\begin{center}
	$\displaystyle d_{\hat T(X)}(r(\infty ),r'(\infty ))=\max _{\lambda =1,\cdots ,\Lambda }\frac{1}{2}\log \inf K(h _{\lambda })$,
	\end{center}
where the infimum ranges over all quasiconformal mappings $h_{\lambda }:Y_{\infty ,\lambda }\rightarrow Y'_{\infty ,\lambda }$ such that $h_{\lambda }$ is homotopic to $(g'_{\infty }\circ f')\circ \alpha \circ (g_{\infty }\circ f)^{-1}$.
This definition means that the distance between end points of Jenkins-Strebel rays is the maximum of the distance between the corresponding points of the Teichm\"uller space of each component of the end points of the rays.

\section{Proof of Theorem}
We recall our main theorem.

\setcounter{section}{1}
\begin{thm*}
For any two Jenkins-Strebel rays $r$, $r'$,
	\begin{eqnarray}
	(\ref{eq})
	&&\lim _{t\rightarrow \infty }d_{T(X)}(r(t),r'(t))=\nonumber \\
	&&\left\{\nonumber
	\begin{array}{l}
	\max \left\{\displaystyle \frac{1}{2}\log \max _{j=1,\cdots ,k}\left\{\frac{m'_j}{m_j},\frac{m_j}{m'_j}\right\}, d_{\hat T(X)}(r(\infty ),r'(\infty ))\right\}\\
	\hfill (${\rm if }$r$, $r'$ {\rm are similar}$)\\
	+\infty \ (${\rm otherwise}$) 
	\end{array}
	\right.
	\end{eqnarray}
\end{thm*}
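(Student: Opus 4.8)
The plan is to treat the two cases separately and, in the similar case, to establish matching lower and upper bounds for the $\liminf$ and $\limsup$ of $d_{T(X)}(r(t),r'(t))$. Throughout I rely on the annular description of $Y_t$, $Y'_t$ recalled above: at time $t$ the curves $f(\gamma_j)$, $f'(\gamma_j)$ bound round collars $A_j^l(t)$, $A_j'^l(t)$ whose moduli are of order $e^{2t}m_j$, $e^{2t}m'_j$, and whose complements converge to the thick components $Y_{\infty,\lambda}$, $Y'_{\infty,\lambda}$ of the nodal limits.

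For the non-similar case I would exhibit a single curve witnessing divergence. If $r$, $r'$ are not similar there is a core curve, say $\gamma_j$, of $q$ whose class $f'(\gamma_j)$ is not a core curve of $q'$; then $\ext_{Y_t}(f(\gamma_j))\to 0$ while $\ext_{Y'_t}(f'(\gamma_j))$ stays bounded below, either because $\gamma_j$ lies in a thick part of $Y'_\infty$ or because it must cross the pinching collars of $q'$. Since a $K$-quasiconformal map in the relevant homotopy class distorts extremal length by at most the factor $K$, the extremal dilatation $K_t$ of $r(t)\to r'(t)$ satisfies $\log K_t\ge \log\bigl(\ext_{Y'_t}(f'(\gamma_j))/\ext_{Y_t}(f(\gamma_j))\bigr)\to\infty$, so $d_{T(X)}(r(t),r'(t))\to\infty$.

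For the lower bound in the similar case I would combine two independent estimates and take their maximum. First, comparing the moduli of the collars of $f(\gamma_j)$ and $f'(\gamma_j)$ through any competing quasiconformal map gives, for every $t$, the bound $\tfrac{1}{2}\log K_t\ge \tfrac{1}{2}\log\max_{j}\{m'_j/m_j,\,m_j/m'_j\}$, since the $e^{2t}$ factors cancel in the ratio of extremal lengths. Second, to recover the $d_{\hat T(X)}$ term I would take near-extremal maps $h_t:Y_t\to Y'_t$, restrict them to the thick parts, and pass to a limit: by a normal-families argument the restrictions converge on each component to a quasiconformal map $Y_{\infty,\lambda}\to Y'_{\infty,\lambda}$ lying in the homotopy class that defines $d_{\hat T(X)}$, with dilatation at most $\liminf_t K(h_t)$. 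Taking the maximum over $\lambda$ yields $\liminf_t \tfrac{1}{2}\log K(h_t)\ge d_{\hat T(X)}(r(\infty),r'(\infty))$, and together the two estimates give the lower bound by the maximum.

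The upper bound is where the real work lies, and I expect the gluing to be the main obstacle. Fix $\varepsilon>0$ and choose quasiconformal maps $\phi_\lambda:Y_{\infty,\lambda}\to Y'_{\infty,\lambda}$ in the prescribed homotopy class with $\tfrac{1}{2}\log K(\phi_\lambda)\le d_{\hat T(X)}(r(\infty),r'(\infty))+\varepsilon$, and on each collar use the radial Teichm\"uller map $A_j^l(t)\to A_j'^l(t)$, whose dilatation equals $\max\{m'_j/m_j,\,m_j/m'_j\}$. I would then splice the maps $\phi_\lambda$ on the thick parts to the collar maps. The difficulty is that the boundary values need not agree: there is a rotational (twisting) discrepancy together with a mismatch of the interpolating annulus parameters. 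Because the collars have modulus of order $e^{2t}\to\infty$, a bounded rotational correction can be spread over a long annulus at the cost of an extra dilatation factor tending to $1$, while the inner boundaries shrink to the nodes so that the node identifications are respected in the limit; the marking is matched using the homeomorphism $\alpha$ from the definition of similarity, which keeps the spliced map in the correct homotopy class. The resulting maps $h_t$ then satisfy $\limsup_t \tfrac{1}{2}\log K(h_t)\le \max\{\tfrac{1}{2}\log\max_{j}\{m'_j/m_j,\,m_j/m'_j\},\,d_{\hat T(X)}(r(\infty),r'(\infty))+\varepsilon\}$, and letting $\varepsilon\to 0$ and combining with the lower bound completes the proof.
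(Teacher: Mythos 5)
Your overall architecture matches the paper's: the non-similar case via extremal length of a witnessing curve (the paper simply cites \cite{Ivanov01}, \cite{LenMas10}, \cite{Amano14}), the lower bound as the maximum of the collar-moduli estimate (quoted from \cite{Amano14}) and a geometric-limit estimate (quoted from \cite{Masur75}), and the upper bound by splicing near-extremal maps on the thick parts with radial maps on the long collars. But there is a genuine gap in the upper bound, at exactly the point you call the main obstacle. You assert that the mismatch between $\phi_\lambda$ and the collar map is ``a bounded rotational correction'' that can be spread over an annulus of modulus of order $e^{2t}$ at dilatation cost $1+o(1)$. That is only true if $\phi_\lambda$, restricted to small circles about the puncture, is asymptotically a rotation-scaling $z\mapsto cz$. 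A general near-extremal quasiconformal map between punctured disks is not: for instance, if $\phi_\lambda$ is a Teichm\"uller map whose affine stretch persists up to the puncture, then on \emph{every} circle $|z|=\rho$ its boundary values differ from any rotation by a quasisymmetric map with quasisymmetry constant bounded away from $1$, uniformly in $\rho$. Extending such a fixed non-conformal boundary mismatch to a quasiconformal map of an annulus costs a dilatation factor bounded away from $1$ no matter how large the modulus of the annulus is (a normal-families argument shows that if the cost tended to $1$, the mismatch would have to be the boundary value of a conformal map, hence essentially a rotation). So your spliced maps only yield $\limsup_t \frac{1}{2}\log K(h_t)\leq \max\{\cdots\}+C$ for some fixed constant $C>0$, not the sharp bound.

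The paper closes precisely this gap with its Lemma \ref{lemma}: the Teichm\"uller map $h:Y_\infty\rightarrow Y'_\infty$ can be replaced, at cost $K+O(\varepsilon)$, by a homotopic map that is \emph{conformal} in a neighborhood of the nodes; the proof runs through a normal-families argument showing the relevant boundary values become $(1+O(\varepsilon))$-quasisymmetric (Lemma \ref{qs}) and then invokes Gupta's extension lemma. Conformality near the node gives the expansion $h_j^l(z)=c_j^lz+\psi_j^l(z)$ with $\psi_j^l(z)=O(z^2)$, so on circles of radius $\Delta_j(t)\rightarrow 0$ the mismatch really is asymptotically the rotation-scaling $c_j^lz$; the paper then interpolates explicitly via $Q_{j,t}^l$ (a fixed-modulus annulus, with $K(Q_{j,t}^l)\rightarrow 1$) and $P_{j,t}^l$ (which spreads the twist $\arg c_j^l$ and the radial rescaling over the long collar, with dilatation tending to $M_j$ up to $\varepsilon'$), and also checks $|\arg c_j^1+\arg c_j^2|<2\pi$ so that no spurious Dehn twist corrupts the homotopy class --- a bookkeeping point your appeal to $\alpha$ leaves implicit. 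If you add the conformality-near-the-nodes step (or some equivalent control of $\phi_\lambda$ at the punctures), your plan becomes the paper's proof; without it, the claim that the splicing cost tends to $1$ fails.
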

\setcounter{section}{3}

We use the following lemma.
\begin{lemma} \label{lemma}
Let $R$, $R'$ be Riemann surfaces with nodes and $f:R\rightarrow R'$ be a $K$-quasiconformal Teichm\"uller mapping.
This means that $f$ is a homeomorphism, each restricted mapping of $f$ which maps a component of $R-\{$nodes of $R\}$ onto a component of $R'-\{$nodes of $R'\}$ is a Teichm\"uller mapping, and the maximum of maximal dilatations of such mappings is $K$.
Then, for any sufficiently small $\varepsilon >0$, there exists the $(K+O(\varepsilon ))$-quasiconformal mapping $g:R\rightarrow R'$ such that $g$ is conformal on a neighborhood of the set of nodes of $R$, and is homotopic to $f$.
\end{lemma}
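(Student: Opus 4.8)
The plan is to reduce the assertion to a local modification of $f$ in a collar about each node, replacing $f$ there by an interpolation between $f$ and a conformal map. Because distinct nodes of $R$ have disjoint neighborhoods and $f$ sends nodes to nodes, I would treat one node at a time and leave $f$ untouched outside a fixed small neighborhood of the set of nodes; on that outer region $f$ is already a Teichm\"uller mapping of dilatation at most $K$, so it imposes no penalty beyond $K$. If $R$ has no node the claim is vacuous with $g=f$, so suppose $p$ is a node. A neighborhood of $p$ is a union of two punctured disks glued at $p$, each a neighborhood of a puncture of a component of $R$; I would carry out the construction on each branch separately and glue at $p$, which is consistent since $f(p)$ is the matching node of $R'$.

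Fix one branch and choose local holomorphic coordinates $z$ near the source puncture ($z=0$) and $w$ near its $f$-image ($w=0$). On this branch $f$ is a Teichm\"uller mapping whose associated integrable quadratic differential $q$ has at most a simple pole at $z=0$. Passing to the cylinder coordinate $z=s_0e^{-\tau}$ with $\tau=\sigma+i\theta$ turns the punctured disk into a half-infinite cylinder, and the simple-pole normal form of $q$ provides an explicit affine model $\Phi$ of dilatation $K$ to which $f$ is asymptotic as $\sigma\to\infty$. I would then define $g$ to equal $f$ on $\{|z|\ge s_0\}$, to be a conformal map onto a disk about $w=0$ (a translation in the cylinder coordinate) on $\{|z|\le s\}$, and, on the collar $\{s\le|z|\le s_0\}$ --- a cylinder of height $L=\log(s_0/s)$ and modulus $L/2\pi$ --- to be the linear-in-$\sigma$ interpolation between the boundary value of $f$ at $\sigma=0$ and that of the conformal map at $\sigma=L$.

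For each $L$ this interpolation is a homeomorphism agreeing with $f$ on the outer boundary circle, so $g$ is homotopic to $f$ rel that circle, and hence on all of $R$; since $g$ is built as an explicit isotopy of boundary data no Dehn-twist ambiguity enters. For the dilatation, the only new contribution arises from the $\sigma$-derivative of the interpolation, of size $O(1/L)$ because the affine model $\Phi$ and the conformal model differ by a bounded amount across one period of the cylinder; together with the pointwise bounds $K$ and $1$ on the dilatations being interpolated, a routine computation yields a dilatation bound $K+O(1/L)$ on the collar. Choosing $L$ with $1/L\le\varepsilon$, i.e. taking the conformal disk $\{|z|\le s\}$ sufficiently small, produces a $(K+O(\varepsilon))$-quasiconformal $g$ that is conformal near $p$; performing this simultaneously in disjoint collars about all nodes gives the lemma.

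The hard part will be the estimate of the previous paragraph, and inside it the verification that $f$ is asymptotically affine in the cylinder coordinate with \emph{bounded} mismatch from a conformal map across each period: this is precisely where integrability of $q$, hence the at-most-simple-pole hypothesis at the node, is essential, since it is what keeps the quantity to be bridged over the collar bounded rather than growing toward the puncture. Once this normal form and the resulting $O(1/L)$ collar estimate are established, the localization, the preservation of the homotopy class, and the gluing at the node are all routine.
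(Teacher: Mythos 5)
Your proposal is correct, but it takes a genuinely different route from the paper's proof. The paper never analyzes the normal form of the quadratic differential at the node: it truncates the Beltrami coefficient of $f$ to $0$ near the nodes, obtaining a map $f_{\varepsilon}\colon R\rightarrow R_{\varepsilon}$ of the same dilatation $K$ that is already conformal there; the comparison map $f\circ f_{\varepsilon}^{-1}$ is conformal off small disks about the nodes, a normal-families argument shows its normalized restrictions tend to the identity, the quasisymmetry estimate of Lemma \ref{qs} (the author's new contribution, replacing part of the argument of \cite{FarMas10}) shows its boundary values are $(1+O(\varepsilon))$-quasisymmetric, and Lemma 5.1 of \cite{Gupta11} then replaces it by a $(1+O(\varepsilon))$-quasiconformal map equal to the identity near the node; finally an explicit piecewise deformation $H_{\varepsilon}$ in $q$-coordinates---which only has to remove a dilatation of size $1+O(\varepsilon)$, not $K$---makes the map conformal at the nodes, the homotopy class being preserved by discreteness of the mapping class group. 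You instead modify $f$ directly, grafting in a conformal disk and interpolating across a collar of height $L$ at cost $K+O(1/L)$: this is self-contained and quantitative (no normal families, no appeal to the extension theorem of \cite{Gupta11}), while the paper's softer argument buys freedom from your self-identified ``hard part,'' the asymptotics of $f$ at the pole. Two cautions on that part: the model is not affine in the cylinder coordinate---for order $n\geq -1$ at the puncture the natural parameter is $\zeta \sim z^{\frac{n+2}{2}}$, so the model is the $\sigma$-translation-invariant $K$-quasiconformal shear conjugate to the affine stretch, and translation invariance (bounded displacement per period) is exactly what integrability gives you, as you say; and on the collar the new dilatation is not only the $O(1/L)$ $\sigma$-derivative, since near the outer boundary the $\theta$-derivative alone already carries dilatation close to $K$, so you must check that the interpolated derivative $(1-s)u_0'(\theta)+si$, paired with $\partial _{\sigma }=1+O(1/L)$, never exceeds the endpoint dilatation (this follows from convexity of hyperbolic disks in the half-plane parametrizing normalized linear maps), and that the interpolation is injective (the same local-diffeomorphism-plus-covering verification the paper carries out for its map $Q_{j,t}^l$). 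Note also that your worry about twisting is vacuous: any twist introduced near a node is about a puncture-parallel curve, hence homotopically trivial.
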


The lemma is proved in the paper of \cite{FarMas10}, however we give a new proof of the latter part of their proof.

\begin{proof}[Proof of Lemma \ref{lemma}]
Let $\mu $ be the Beltrami coefficient of $f$.
For any $\varepsilon >0$, we consider a new Beltrami coefficient
	\begin{eqnarray} \nonumber
	\mu _\varepsilon =\left\{
	\begin{array}{l}
	0\ (0<|z|<\varepsilon )\\
	\mu \ ($otherwise$)
	\end{array}
	\right.
	\end{eqnarray}
on $R$, where $z$ is each local coordinate near nodes of $R$ and the domain $\{|z|<\varepsilon \}$ represents a neighborhood of nodes $z=0$.
Then, there exist a Riemann surface with nodes $R_{\varepsilon }$ and a $K$-quasiconformal mapping $f_{\varepsilon }:R\rightarrow R_{\varepsilon }$ such that $f_{\varepsilon }$ is conformal on the neighborhood of nodes of $R$.
For sufficiently small $\varepsilon $, $f_{\varepsilon }$ is close to $f$ and $R_{\varepsilon }$ is close to $R'$.
The mapping $f\circ f_{\varepsilon }^{-1}:R_{\varepsilon }\rightarrow R'$ is $K$-quasiconformal in a small neighborhood of nodes of $R_{\varepsilon }$ and is conformal on the outside of the neighborhood.
We use local coordinates such that nodes of $R_{\varepsilon }$ and $R'$ correspond to $0$, and $f\circ f_{\varepsilon }^{-1}(0)=0$.
Let $p_1,\cdots ,p_k$ be all nodes of $R_{\varepsilon }$.
For any $j=1,\cdots, k$, we take small disks $N_j^1=N_j^2=\{|z|<\delta \}$ about $p_j$ in $R_{\varepsilon }$ where $f\circ f_{\varepsilon }^{-1}$ is $K$-quasiconformal in $\overline{N_j^1}\cup \overline{N_j^2}$.
For any $j=1,\cdots ,k$ and $l=1,2$, the image $f\circ f_{\varepsilon }^{-1}(N_j^l)$ in $R'$ is mapped to the disk $\{|z|<\delta \}$ by a conformal mapping $f_{j,\varepsilon }^l$ such that $f_{j,\varepsilon }^l(0)=0$ and $f_{j,\varepsilon }^l(\delta )=\delta $.
We denote simply by $F_{\varepsilon }:=f_{j,\varepsilon }^l\circ f\circ f_{\varepsilon }^{-1}$.
The family of $K$-quasiconformal mappings $\{F_{\varepsilon }\}$ is normal, then we can assume that $F_{\varepsilon }$ converges to a $K$-quasiconformal mapping $F_0$ uniformly on any compact set of $\{|z|<\delta \}$ as $\varepsilon \rightarrow 0$.
However, any point of $\{0<|z|<\delta \}$ is a holomorphic point of $F_{\varepsilon }$ for sufficiently small $\varepsilon $, then $F_0$ is holomorphic in $\{0<|z|<\delta \}$.
Since $F_0$ fixes $0$ and $\delta $, we can see that $F_0$ is an automorphism on $\{|z|<\delta \}$ and then it is the identity.

Now, we rescale $\{|z|<\delta \}$ to $\mathbb D$, and assume that $\{F_{\varepsilon }\}$ as mappings of $\mathbb D$ onto itself.
We set a conformal mapping $\phi (z):=(z-i)/(z+i)$ of $\mathbb H=\{z\in \mathbb C\ |\ {\rm Im}z>0\}$ onto $\mathbb D$.
We consider mappings $f_{\varepsilon }:=\phi ^{-1}\circ F_{\varepsilon }\circ \phi$ of $\mathbb H$ onto itself.
\begin{lemma} \label{qs}
We have
	\begin{center}
	$\displaystyle \lim _{\varepsilon \rightarrow 0}\sup _{x,t}\frac{f_{\varepsilon }(x+t)-f_{\varepsilon }(x)}{f_{\varepsilon }(x)-f_{\varepsilon }(x-t)}=1$,
	\end{center}
where the supremum ranges over all $x,t\in \mathbb R$ such that $t\not =0$.
\end{lemma}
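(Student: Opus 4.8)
The plan is to reduce the statement to a uniform estimate on the boundary derivative $f_\varepsilon'$ along $\mathbb{R}$, and to obtain that estimate by upgrading the interior convergence $F_\varepsilon\rightarrow \mathrm{id}$ to convergence across $\partial\mathbb{D}$ via Schwarz reflection. Recall that, after rescaling $\{|z|<\delta\}$ to $\mathbb{D}$, the maps $F_\varepsilon$ are $K$-quasiconformal self-maps of $\mathbb{D}$ fixing $0$ and $1$, that $F_\varepsilon\rightarrow F_0=\mathrm{id}$ uniformly on compact subsets of $\mathbb{D}$, and that each $F_\varepsilon$ is holomorphic on the annulus $\{r_\varepsilon<|z|<1\}$ with $r_\varepsilon\rightarrow 0$ (the non-conformal part of $F_\varepsilon$ being confined to a disk about the center that shrinks to the node). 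As a quasiconformal self-map, $F_\varepsilon$ extends to a homeomorphism of $\overline{\mathbb{D}}$ carrying $\partial\mathbb{D}$ onto itself.

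First I would fix $r_0\in(0,1)$ and, for all $\varepsilon$ small enough that $r_\varepsilon<r_0$, reflect $F_\varepsilon$ across the unit circle by $z\mapsto 1/\overline{F_\varepsilon(1/\bar z)}$; since $F_\varepsilon$ is holomorphic on $\{r_0<|z|<1\}$ and maps $\partial\mathbb{D}$ into $\partial\mathbb{D}$, this produces a holomorphic, injective extension of $F_\varepsilon$ to the full annulus $A:=\{r_0<|z|<1/r_0\}$. Because the normalized $K$-quasiconformal maps $F_\varepsilon$ form a normal family with $F_\varepsilon(0)=0$, there is a uniform lower bound $|F_\varepsilon(z)|\geq c(r_0)>0$ for $|z|\geq r_0$, so the reflected maps are uniformly bounded on $A$. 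They are holomorphic on the connected domain $A$, uniformly bounded, and converge to $\mathrm{id}$ on the sub-annulus $\{r_0<|z|<1\}$ (which has limit points in $A$); hence by the Vitali--Porter theorem the convergence $F_\varepsilon\rightarrow \mathrm{id}$ holds locally uniformly on all of $A$, in particular across $\partial\mathbb{D}$. Pulling back by the Möbius map $\phi$, which carries the circle $\widehat{\mathbb{R}}=\mathbb{R}\cup\{\infty\}$ onto $\partial\mathbb{D}$, the maps $f_\varepsilon=\phi^{-1}\circ F_\varepsilon\circ\phi$ are holomorphic on a fixed neighborhood of the compact set $\widehat{\mathbb{R}}$ in $\widehat{\mathbb{C}}$ and converge there locally uniformly to $\mathrm{id}$. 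Cauchy's estimate then yields $\eta_\varepsilon:=\sup_{x\in\mathbb{R}}|f_\varepsilon'(x)-1|\rightarrow 0$, where $f_\varepsilon'$ is the ordinary (real, positive) derivative of the increasing boundary map $f_\varepsilon:\mathbb{R}\rightarrow\mathbb{R}$.

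With this in hand the conclusion is immediate. For $t>0$ write
\[
\frac{f_\varepsilon(x+t)-f_\varepsilon(x)}{f_\varepsilon(x)-f_\varepsilon(x-t)}=\frac{\int_0^t f_\varepsilon'(x+s)\,ds}{\int_0^t f_\varepsilon'(x-s)\,ds},
\]
so that the numerator and denominator both lie in $[(1-\eta_\varepsilon)t,(1+\eta_\varepsilon)t]$ and the quotient lies in $\left[\frac{1-\eta_\varepsilon}{1+\eta_\varepsilon},\frac{1+\eta_\varepsilon}{1-\eta_\varepsilon}\right]$ for every $x$ and every $t\neq 0$ (the case $t<0$ reduces to this upon replacing $t$ by $-t$). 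Thus the supremum in the statement is at most $\frac{1+\eta_\varepsilon}{1-\eta_\varepsilon}$; since replacing $t$ by $-t$ inverts the quotient, for each $(x,t)$ at least one of the two values is $\geq 1$, so the supremum is also $\geq 1$. Letting $\varepsilon\rightarrow 0$ gives the claimed limit $1$.

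The main obstacle is exactly the uniformity over all scales, i.e. as $x$ or $t$ tends to infinity, where the three points $x-t,x,x+t$ all approach the fixed point $\infty=\phi^{-1}(1)$: mere convergence of $F_\varepsilon$ on compacta of the open disk, or $C^1$-convergence of $f_\varepsilon$ on bounded parts of $\mathbb{R}$, does not control this regime, because the Euclidean quasisymmetry quotient is not governed by the circle-chart behavior near $\phi(\infty)$. The Schwarz reflection step is what removes this difficulty, since it promotes the interior convergence to genuine derivative convergence at and across the boundary circle, uniformly at the point $1=\phi(\infty)$; once $\eta_\varepsilon\rightarrow 0$ is established, the quotient estimate is routine.
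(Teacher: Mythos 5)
Your proof is correct, but it takes a genuinely different route from the paper's in its main estimate. The paper never leaves the circle: it rewrites the quasisymmetry quotient as a cross ratio $-(F_{\varepsilon}\circ \phi (x),F_{\varepsilon}\circ \phi (x+t),F_{\varepsilon}\circ \phi (x-t),1)$, asserts that $F_{\varepsilon}$ and $dF_{\varepsilon}/dz$ converge to $z$ and $1$ uniformly on $\partial \mathbb D$, deduces $|d\arg F_{\varepsilon}(e^{iy})/dy-1|<E$, and then estimates each of the four chords by explicit trigonometry (chord lengths of the form $2\sin \{(1\pm E)(\arctan (x+t)-\arctan x)\}$, the elementary bound $|\sin \{(1\pm E)\alpha \}/\sin \alpha -1|\leq \pi E$, and a final product of sines whose algebraic cancellation delivers uniformity in $x,t$, including the delicate regime where $x\pm t\rightarrow \infty $). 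You instead upgrade the interior convergence by an explicit Schwarz reflection across $\partial \mathbb D$ plus Vitali--Porter, transfer to $\widehat{\mathbb R}$, and reduce everything to $\eta _{\varepsilon}=\sup _{x\in \mathbb R}|f_{\varepsilon}'(x)-1|\rightarrow 0$, after which the quotient lies in $[(1-\eta _{\varepsilon})/(1+\eta _{\varepsilon}),(1+\eta _{\varepsilon})/(1-\eta _{\varepsilon})]$ trivially. Two remarks on the comparison. First, your reflection step is not merely a variant: it supplies the justification the paper leaves implicit when it asserts uniform convergence of $F_{\varepsilon}$ and its derivative \emph{on} $\partial \mathbb D$, which only makes sense once $F_{\varepsilon}$ is holomorphic across the circle --- exactly what reflection (available because the non-conformal locus shrinks to the node) plus Vitali gives; so your write-up actually fills in a step of the paper's argument. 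Second, the one compressed point in your proposal is the claim that ``Cauchy's estimate'' yields $\eta _{\varepsilon}\rightarrow 0$: on bounded parts of $\mathbb R$ this is literally Cauchy, but for $|x|$ large the Euclidean derivative $f_{\varepsilon}'(x)$ is not controlled by locally uniform chart convergence alone; you need the normalization $f_{\varepsilon}(\infty )=\infty $ (from $F_{\varepsilon}(1)=1$) so that $g_{\varepsilon}(w)=1/f_{\varepsilon}(1/w)$ fixes $0$, whence the Schwarz lemma gives $|g_{\varepsilon}(w)-w|=O(\mu _{\varepsilon}|w|)$ on a fixed disk about $0$ and then $f_{\varepsilon}'(x)=g_{\varepsilon}'(1/x)/(xg_{\varepsilon}(1/x))^2\rightarrow 1$ uniformly for large $|x|$. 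With that detail inserted your argument is complete, and arguably cleaner and more conceptual than the paper's trigonometric computation, at the cost of the chart bookkeeping at $\infty $ that the paper's cross-ratio and arctangent identities handle automatically.
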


\begin{proof}[Proof of Lemma \ref{qs}]
We notice that $\frac{f_{\varepsilon }(x+t)-f_{\varepsilon }(x)}{f_{\varepsilon }(x)-f_{\varepsilon }(x-t)}$ is positive, and its reciprocal is the case of $-t$.
Then, it suffice to show that for any $t>0$.
Let
	\begin{center}
	$\displaystyle (z_1,z_2,z_3,z_4)=\frac{z_1-z_2}{z_1-z_3}\cdot \frac{z_3-z_4}{z_2-z_4}$
	\end{center}
be a cross ratio for any $z_1,z_2,z_3,z_4\in \hat {\mathbb C}$.
We write $\phi (x)=e^{i\theta }$, $\phi (x+t)=e^{i(\theta +\varphi )}$ and $\phi (x-t)=e^{i(\theta -\psi )}$ where $0\leq \theta <2\pi$ and $\varphi ,\psi >0$.
Since all M\"obius transformation preserve cross ratios,
	\begin{eqnarray}
	\frac{f_{\varepsilon }(x+t)-f_{\varepsilon }(x)}{f_{\varepsilon }(x)-f_{\varepsilon }(x-t)}&=&-(f_{\varepsilon }(x),f_{\varepsilon }(x+t),f_{\varepsilon }(x-t),\infty )\nonumber \\
	&=&-(F_{\varepsilon }\circ \phi (x),F_{\varepsilon }\circ \phi (x+t),F_{\varepsilon }\circ \phi (x-t),1)\nonumber \\
	&=&\left|\frac{F_{\varepsilon }(e^{i(\theta +\varphi )})-F_{\varepsilon }(e^{i\theta })}{F_{\varepsilon }(e^{i\theta })-F_{\varepsilon }(e^{i(\theta -\psi )})}\cdot \frac{F_{\varepsilon }(e^{i(\theta -\psi )})-1}{F_{\varepsilon }(e^{i(\theta +\varphi )})-1}\right|. \label{term}
	\end{eqnarray}
We set $z=e^{iy}$.
By $\log F_{\varepsilon }(e^{iy})=i(\arg F_{\varepsilon }(e^{iy})+2n\pi )$, we have
	\begin{center}
	$\displaystyle \frac{d\arg F_{\varepsilon }(e^{iy})}{dy}=\frac{z\frac{dF_{\varepsilon }(z)}{dz}}{F_{\varepsilon }(z)}$.
	\end{center}
Since $F_{\varepsilon }(z)$ and $\frac{dF_{\varepsilon }(z)}{dz}$ converge to $z$ and $1$ uniformly on $\partial \mathbb D$ respectively, then
	\begin{eqnarray}
	\sup _{0\leq y<2\pi }\left|\frac{d\arg F_{\varepsilon }(e^{iy})}{dy}-1\right|&=&\sup \left|z\frac{dF_{\varepsilon }(z)}{dz}-F_{\varepsilon }(z)\right|\nonumber \\
	&\leq &\sup \left(\left|\frac{dF_{\varepsilon }(z)}{dz}-1\right|+\left|z-F_{\varepsilon }(z)\right|\right)\rightarrow 0\nonumber
	\end{eqnarray}
as $\varepsilon \rightarrow 0$.
For any $0<E<1$, we take sufficiently small $\varepsilon $ such that
	\begin{center}
	$\displaystyle \left|\frac{d\arg F_{\varepsilon }(e^{iy})}{dy}-1\right|<E$
	\end{center}
holds for any $0\leq y<2\pi $.
Now, we calculate and estimate each term of (\ref{term}).
	\begin{eqnarray}
	|F_{\varepsilon }(e^{i(\theta +\varphi )})-F_{\varepsilon }(e^{i\theta })|&=&2\sin \frac{\arg F_{\varepsilon }(e^{i(\theta +\varphi )})-\arg F_{\varepsilon }(e^{i\theta })}{2}\nonumber \\
	&=&2\sin \frac{\int _{\theta }^{\theta +\varphi }\frac{d\arg F_{\varepsilon }(e^{iy})}{dy}dy}{2}\nonumber \\
	&<&2\sin \frac{(1+E)\varphi }{2}\nonumber \\
	&=&2\sin \frac{(1+E)(\arg \phi (x+t)-\arg \phi (x))}{2}\nonumber \\
	&=&2\sin \frac{(1+E)\int _x^{x+t}\frac{d\arg \phi (y)}{dy}dy}{2}\nonumber \\
	&=&2\sin \frac{(1+E)\int _x^{x+t}\frac{\frac{d\phi (y)}{dy}}{i\phi (y)}dy}{2}\nonumber \\
	&=&2\sin \frac{(1+E)\int _x^{x+t}\frac{2}{1+y^2}dy}{2}\nonumber \\
	&=&2\sin \{(1+E)(\arctan (x+t)-\arctan x)\},\nonumber
	\end{eqnarray}
and similarly,
	\begin{center}
	$\displaystyle |F_{\varepsilon }(e^{i(\theta +\varphi )})-F_{\varepsilon }(e^{i\theta })|>2\sin \{(1-E)(\arctan (x+t)-\arctan x)\}$.
	\end{center}
For any $0<\alpha \leq \pi /2$,
	\begin{eqnarray}
	\left|\frac{\sin \{(1\pm E)\alpha \}}{\sin \alpha }-1\right|&=&\left|\frac{\sin \alpha \cos (E\alpha )\pm \cos \alpha \sin (E\alpha )}{\sin \alpha }-1\right|\nonumber \\
	&=&\left|\cos (E\alpha )\pm \cos \alpha \frac{\sin (E\alpha )}{\sin \alpha }-1\right|\nonumber \\
	&\leq &|\cos (E\alpha )-1|+\left|\frac{\sin (E\alpha )}{\sin \alpha }\right|\nonumber \\
	&=&|\cos (E\alpha )-1|+\left|\frac{\sin (E\alpha )}{E\alpha }\right|\left|\frac{\alpha }{\sin \alpha }\right|E\nonumber \\
	&\leq &E\alpha +\frac{\pi }{2}E\leq \pi E\rightarrow 0\nonumber
	\end{eqnarray}
as $E\rightarrow 0$.
This means that we can write $\sin \{(1\pm E)\alpha \}=(1+O(E))\sin \alpha $.
Therefore,
	\begin{eqnarray}
	&&2\sin \{(1\pm E)(\arctan (x+t)-\arctan x)\}\nonumber \\
	&=&2(1+O(E))\sin (\arctan (x+t)-\arctan x).\nonumber
	\end{eqnarray}
We conclude that
	\begin{center}
	$|F_{\varepsilon }(e^{i(\theta +\varphi )})-F_{\varepsilon }(e^{i\theta })|<2(1+O(E))\sin (\arctan (x+t)-\arctan x)$
	\end{center}
and
	\begin{center}
	$|F_{\varepsilon }(e^{i(\theta +\varphi )})-F_{\varepsilon }(e^{i\theta })|>2(1+O(E))\sin (\arctan (x+t)-\arctan x)$.
	\end{center}
Also we have similar estimates for
	\begin{center}
	$|F_{\varepsilon }(e^{i\theta })-F_{\varepsilon }(e^{i(\theta -\psi )})|$, $|F_{\varepsilon }(e^{i(\theta -\psi )})-1|$, and $|F_{\varepsilon }(e^{i(\theta +\varphi )})-1|$.
	\end{center}
Finally, we can see that
	\begin{eqnarray}
	&&\frac{f_{\varepsilon }(x+t)-f_{\varepsilon }(x)}{f_{\varepsilon }(x)-f_{\varepsilon }(x-t)}\nonumber \\
	&<&(1+O(E))\frac{\sin (\arctan (x+t)-\arctan x)\sin (\frac{\pi }{2}-\arctan (x-t))}{\sin (\arctan x-\arctan (x-t))\sin (\frac{\pi }{2}-\arctan (x+t))}\nonumber \\
	&=&(1+O(E))\frac{\frac{t}{\sqrt{1+(x+t)^2}\sqrt{1+x^2}}\frac{1}{\sqrt{1+(x-t)^2}}}{\frac{t}{\sqrt{1+(x-t)^2}\sqrt{1+x^2}}\frac{1}{\sqrt{1+(x+t)^2}}}\nonumber \\
	&=&1+O(E).\nonumber
	\end{eqnarray}
The lower estimate is similar.
\end{proof}

Lemma \ref{qs} implies that the mapping $f_{j,\varepsilon }^l\circ f\circ f_{\varepsilon }^{-1}$ is $(1+O(\varepsilon ))$-quasisymmetric on the circle $\partial N_j^l=\{|z|=\delta \}$ for sufficiently small $\varepsilon $.
We apply Lemma 5.1 in \cite{Gupta11}.
Then, there exists a mapping $\eta _{j,\varepsilon }^l:N_j^l\rightarrow \{|z|<\delta \}$ which is $(1+O(\varepsilon ))$-quasiconformal, $\eta _{j,\varepsilon }^l|_{\partial N_j^l}=f_{j,\varepsilon }^l\circ f\circ f_{\varepsilon }^{-1}|_{\partial N_j^l}$, and $\eta _{j,\varepsilon }^l$ is the identity in a sufficiently small neighborhood of $0$. 
The mapping $(f_{j,\varepsilon }^l)^{-1}\circ \eta _{j,\varepsilon }^l:N_j^l\rightarrow f\circ f_{\varepsilon }^{-1}(N_j^l)$ is $(1+O(\varepsilon ))$-quasiconformal and satisfies $(f_{j,\varepsilon }^l)^{-1}\circ \eta _{j,\varepsilon }^l|_{\partial N_j^l}=f\circ f_{\varepsilon }^{-1}|_{\partial N_j^l}$ and $(f_{j,\varepsilon }^l)^{-1}\circ \eta _{j,\varepsilon }^l(0)=0$.
We consider the mapping of $R_{\varepsilon }$ onto $R'$ which is $(f_{j,\varepsilon }^l)^{-1}\circ \eta _{j,\varepsilon }^l$ in $N_j^l$ for any $j=1,\cdots ,k$ and $l=1,2$, and is $f\circ f_{\varepsilon }^{-1}$ on $R_{\varepsilon }-\bigcup _{j=1,\cdots ,k}^{l=1,2}N_j^l$.
This mapping is $(1+O(\varepsilon ))$-quasiconformal and is clearly homotopic to $f\circ f_{\varepsilon }^{-1}$.
Therefore, we conclude that there exists a $(1+O(\varepsilon ))$-quasiconformal Teichm\"uller mapping $h_{\varepsilon }:R_{\varepsilon }\rightarrow R'$ which is homotopic to $f\circ f_{\varepsilon }^{-1}$.

We deform $h_{\varepsilon }$ to a $(1+O(\varepsilon ))$-quasiconformal mapping which is conformal on a neighborhood of nodes of $R_{\varepsilon }$.
We use $\varepsilon '<1$ instead of $O(\varepsilon )$.
Let $q$ be the holomorphic quadratic differential on $R_{\varepsilon }-\{$nodes of $R_{\varepsilon }\}$ which is corresponding to the Teichm\"uller mapping $h_{\varepsilon }$.
Let $z=x+iy$ be any $q$-coordinate, then $h_{\varepsilon }$ is represented by $z\mapsto x+i(1+\varepsilon ')y$.
We consider the set $D_{\varepsilon }=\{ -\varepsilon '\leq x\leq \varepsilon ', 0\leq y\leq \varepsilon '\}-\{ 0\}$ where $0$ corresponds to a node of $R_{\varepsilon }$.
Let $H_{\varepsilon }$ be a mapping on the half set $\{ 0\leq x,y\leq \varepsilon '\}-\{ 0\}$ which is defined the following:
	\begin{eqnarray} \nonumber
	H_{\varepsilon }(z)=\left\{
	\begin{array}{ll}
	z &(0\leq x,y\leq \varepsilon '^2, z\not =0)\\
	x+i\frac{y-\varepsilon '^3}{1-\varepsilon '} &(0\leq x\leq \varepsilon '^2, \varepsilon '^2\leq y\leq \varepsilon ')\\
	x+i\frac{x+1-\varepsilon '-\varepsilon '^2}{1-\varepsilon '}y &(\varepsilon '^2\leq x\leq \varepsilon ', 0\leq y\leq \varepsilon '^2)\\
	x+i\frac{(1-\varepsilon '^2-\varepsilon '(x+1-\varepsilon '-\varepsilon '^2))y+\varepsilon '^2(x-\varepsilon ')}{(1-\varepsilon ')^2} &(\varepsilon '^2\leq x,y\leq \varepsilon ')
	\end{array}
	\right.
	\end{eqnarray}

An easy calculation shows that $H_{\varepsilon }$ is $(1+O(\varepsilon ))$-quasiconformal.
We extend $H_{\varepsilon }$ symmetrically to $D_{\varepsilon }$.
We assume that $z=0$ is a critical point of $q$ of order $n\geq -1$.
We consider the $n+2$ copies of $D_{\varepsilon }$ around $0$ whose horizontal segments of the boundary of $D_{\varepsilon }$ lie on the horizontal trajectories of $q$ which tend to $0$, and vertical segments of the boundary of $D_{\varepsilon }$ are joined adjacent to other $D_{\varepsilon }$ (Figure \ref{nbd1}).
	\begin{figure}[!ht]
	\begin{center}
	\includegraphics[keepaspectratio, scale=0.75]
	{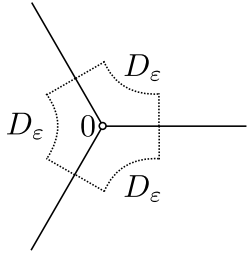}
	\caption{The gluing of $D_{\varepsilon }$ in the case of $n=1$}
	\label{nbd1}
	\end{center}
	\end{figure}

Finally, we extend the mapping $H_{\varepsilon }$ as the original mapping $h_{\varepsilon }$ on the outside of all $D_{\varepsilon }$, i.e., it is of the form $z\mapsto x+i(1+\varepsilon ')y$.
Then the mapping $H_{\varepsilon }$ is defined on the whole surface $R_{\varepsilon }$.
The quasiconformal mapping $H_{\varepsilon }\circ h_{\varepsilon }^{-1}:R'\rightarrow R'$ tends to the identity as $\varepsilon \rightarrow 0$.
Hence, for sufficiently small $\varepsilon $, the quasiconformal mapping $H_{\varepsilon }\circ h_{\varepsilon }^{-1}$ is homotopic to the identity because each mapping class group on components of $R'-\{ $nodes of $R'\}$ is discrete.
We conclude that the composition $H_{\varepsilon }\circ f_{\varepsilon }$ is homotopic to $f$, and it is our desired mapping $g$.
\end{proof}

\begin{proof}[Proof of Theorem \ref{main}]
If $r$, $r'$ are not similar, the result is already known.
We can see it in \cite{Ivanov01}, \cite{LenMas10} and also \cite{Amano14}.

Let $r$, $r'$ be similar Jenkins-Strebel rays on $T(X)$ from $r(0)=[Y,f]$, $r(0)'=[Y',f']$ determined by Jenkins-Strebel differentials $q$, $q'$ with unit norm on $Y$, $Y'$ respectively.
By definition, there exist mutually disjoint simple closed curves $\gamma _1,\cdots ,\gamma _k$ on $X$ such that the set of homotopy classes of core curves of the annuli corresponding to $q$, $q'$ are represented by $f(\gamma _1),\cdots ,f(\gamma _k)$ on $Y$ and $f'(\gamma _1),\cdots , f'(\gamma _k)$ on $Y'$ respectively.
Moreover, there is a homeomorphism $\alpha :X-f^{-1}(\Gamma _q)\rightarrow X-f'^{-1}(\Gamma _{q'})$ which is homotopic to the identity such that the mapping $f'\circ \alpha \circ f^{-1}$ maps the core curves of the annuli corresponding to $f(\gamma _j)$ to the core curves of the annuli corresponding to $f'(\gamma _j)$ for any $j=1,\cdots ,k$.
We denote by $m_j$, $m'_j$ the moduli of the annuli on $Y$, $Y'$ with core curves homotopic to $f(\gamma _j)$, $f'(\gamma _j)$ respectively.
For any $t\geq 0$, we set $r(t)=[Y_t,g_t\circ f]$, $r'(t)=[Y'_t,g'_t\circ f']$ where $g_t:Y\rightarrow Y_t$, $g'_t:Y'\rightarrow Y'_t$ are Teichm\"uller mappings.
Let $r(\infty )=[Y_{\infty },g_{\infty }\circ f]$, $r'(\infty )=[Y'_{\infty },g'_{\infty }\circ f']$ be the end points of $r$, $r'$ in the augmented Teichm\"uller space $\hat T(X)$ respectively.
Let $\{Y_{\infty ,\lambda }\}_{\lambda =1,\cdots ,\Lambda }$, $\{Y'_{\infty ,\lambda }\}_{\lambda =1,\cdots ,\Lambda }$ be the components of $Y_{\infty }-\{$nodes of $Y_{\infty }\}$, $Y'_{\infty }-\{$nodes of $Y'_{\infty }\}$ respectively, such that $(g'_{\infty }\circ f')\circ \alpha \circ (g_{\infty }\circ f)^{-1}(Y_{\infty ,\lambda})=Y'_{\infty ,\lambda}$ for any $\lambda =1,\cdots ,\Lambda $.

First, we consider the upper estimate.
Let $h_{\lambda }:Y_{\infty ,\lambda }\rightarrow Y'_{\infty ,\lambda }$ be the Teichm\"uller mapping which is homotopic to $(g'_{\infty }\circ f')\circ \alpha \circ (g_{\infty }\circ f)^{-1}$, and we set the mapping $h:Y_{\infty }\rightarrow Y'_{\infty }$ constructed by $\{h_{\lambda }\}_{\lambda =1,\cdots ,\Lambda }$.
We set $K=\exp (2d_{\hat T(X)}(r(\infty ),r'(\infty )))=\max _{\lambda =1,\cdots ,\Lambda } K(h_{\lambda })$.
The Riemann surfaces with nodes $Y_{\infty }$, $Y'_{\infty }$ are represented by the unions of closed unit disks $\{\overline {A_j^l(\infty )}\}_{j=1,\cdots ,k}^{l=1,2}$, $\{\overline {A_j^{'l}(\infty )}\}_{j=1,\cdots ,k}^{l=1,2}$ respectively.
Let $h_j^l:=h|_{A_j^l(\infty )}$ be the restriction to $A_j^l(\infty )$ of $h$ for any $j=1,\cdots ,k$ and $l=1,2$, then there is $\lambda $ such that $h_j^l$ is equal to $h_{\lambda }|_{A_j^l(\infty )}$.
We apply Lemma \ref{lemma} to the mapping $h:Y_{\infty }\rightarrow Y'_{\infty }$.
Hence, for any $j=1,\cdots ,k$ and $l=1,2$, we assume that $h_j^l$ is $(K+O(\varepsilon ))$-quasiconformal such that it is conformal in a neighborhood of $0$ in $A_j^l(\infty )$ and is homotopic to $(g'_{\infty }\circ f')\circ \alpha \circ (g_{\infty }\circ f)^{-1}$.
Now, we can use the idea of the proof of Theorem \ref{main} in \cite{Amano14}.
In this neighborhood, $h_j^l$ is represented by a power series, i.e., we can write $h_j^l(z)=c_j^lz+c_{j,2}^lz^2+\cdots =c_j^lz+\psi _j^l(z)$ where $c_j^l\not =0$, $-\pi <\arg c_j^1\leq \pi $ and $-\pi \leq \arg c_j^2<\pi $.
For any $j=1,\cdots ,k$, we set $M_j=\frac{m'_j}{m_j}$, and for any $t\geq 0$, $\delta _j(t)=e^{-e^{2t}m_j\pi }$, $\delta '_j(t) = e^{-e^{2t}m'_j\pi }$, then $\delta '_j(t)=\delta _j(t)^{M_j}$.
We only consider the case of $M_j>1$, so we fix such $j$.
For sufficiently small $\varepsilon $, again, we use $\varepsilon '<1$ instead of $O(\varepsilon )$.
We take $X_j$ as
	\begin{center}
	$\displaystyle X_j<\frac{\log \frac{\varepsilon '}{M_j+\varepsilon '-1}}{\log M_j}<0$.
	\end{center}
We take sufficiently large $t$ such that an inequality $\delta _j(t)^{M_j}<|c_j^l|\delta _j(t)^{M_j^{X_j}}$ holds, and set $\Delta _j(t)=\delta _j(t)^{M_j^{X_j}}$.
Also, we assume that a domain such that $h_j^l$ can be represented by the power series contains $\{|z|\leq 2\Delta _j(t)\}$.
We construct $F_{j,t}^l:A_j^l(t)\rightarrow h(A_j^l(t))-\{|z|<\delta '_j(t)\}$ by the following:
	\begin{eqnarray}
	F_{j,t}^l(z)=
	\left\{
	\begin{array}{lll}
	P_{j,t}^l(z)&(\delta _j(t)\leq |z|\leq \Delta _j(t))&{\rm (i)}\nonumber \\
	Q_{j,t}^l(z)&(\Delta _j(t)\leq |z|\leq 2\Delta _j(t))&{\rm (ii)}\nonumber \\
	h_j^l(z)&(2\Delta _j(t)\leq |z|<1)&{\rm (iii)}\nonumber
	\end{array}
	\right.
	\end{eqnarray}

(i) In $\delta _j(t)\leq |z|\leq \Delta _j(t)$, we set
	\begin{center}
	$P_{j,t}^l(z)=\Delta _j(t)^\frac{1-M_j}{1-M_j^{X_j}}\cdot {c_j^l}^{\frac{1}{1-M_j^{X_j}}+\frac{\log |z|}{\log \Delta _j(t)-\log \delta _j(t)}}\cdot |z|^{-\frac{1-M_j}{1-M_j^{X_j}}}\cdot z$
	\end{center}
which satisfies $P_{j,t}^l(z)=\delta _j(t)^{M_j-1}\cdot z$ on $|z|=\delta _j(t)$, $P_{j,t}^l(z)=c_j^lz$ on $|z|=\Delta _j(t)$.
The mapping $P_{j,t}^l$ is a quasiconformal mapping because it is conjugate to a one-to-one affine mapping by $\log z$.
The maximal dilatation of $P_{j,t}^l$ is
	\begin{center}
	$\displaystyle K(P_{j,t}^l)=\frac{\left|\frac{\log c_j^l}{2(M_j^{X_j}-1)\log \delta _j(t)}+\frac{\alpha _j}{2}+1\right|+\left|\frac{\log c_j^l}{2(M_j^{X_j}-1)\log \delta _j(t)}+\frac{\alpha _j}{2}\right|}{\left|\frac{\log c_j^l}{2(M_j^{X_j}-1)\log \delta _j(t)}+\frac{\alpha _j}{2}+1\right|-\left|\frac{\log c_j^l}{2(M_j^{X_j}-1)\log \delta _j(t)}+\frac{\alpha _j}{2}\right|}$,
	\end{center}
where $\alpha _j=-\frac{1-M_j}{1-M_j^{X_j}}$.
We see that
	\begin{center}
	$\displaystyle K(P_{j,t}^l)\rightarrow \frac{M_j-M_j^{X_j}}{1-M_j^{X_j}}<M_j+\varepsilon '$
	\end{center}
as $t\rightarrow \infty $.

(ii) In $\Delta _j(t)\leq |z|\leq 2\Delta _j(t)$, we set
	\begin{center}
	$Q_{j,t}^l(z)=c_j^lz+\phi _{\Delta _j(t)}(|z|)\psi _j^l(z)$,
	\end{center}
where $\phi _{\Delta _j(t)}:[\Delta _j(t),2\Delta _j(t)]\rightarrow [0,1]$ is defined by
	\begin{center}
	$\displaystyle \phi _{\Delta _j(t)}(|z|)=\frac{|z|}{\Delta _j(t)}-1$.
	\end{center}
Then $Q_{j,t}^l(z)=c_j^lz$ on $|z|=\Delta _j(t)$, $Q_{j,t}^l(z)=h_j^l(z)$ on $|z|=2\Delta _j(t)$.	
We consider the partial derivatives of $Q_{j,t}^l$,
	\begin{center}
	$\displaystyle \partial _{\bar{z}}Q_{j,t}^l=\frac{1}{2\Delta _j(t)}z^{\frac{1}{2}}\bar{z}^{-\frac{1}{2}}\psi _j^l(z)$,\\
	$\displaystyle \partial _zQ_{j,t}^l=c_j^l+\frac{1}{2\Delta _j(t)}z^{-\frac{1}{2}}\bar{z}^{\frac{1}{2}}\psi _j^l(z)+\phi _{\Delta (t)}(|z|)\frac{d\psi _j^l(z)}{dz}$.
	\end{center}
These are continuous in $\Delta _j(t)\leq |z|\leq 2\Delta _j(t)$.
There is $C>0$ such that $|\psi _j^l(z)|\leq C\Delta _j(t)^2$ for sufficiently large $t$.
We see that
	\begin{center}
	$\displaystyle \left|\frac{1}{2\Delta _j(t)}z^{\frac{1}{2}}\bar{z}^{-\frac{1}{2}}\psi _j^l(z)\right|=\left|\frac{1}{2\Delta _j(t)}z^{-\frac{1}{2}}\bar{z}^{\frac{1}{2}}\psi _j^l(z)\right|=\frac{|\psi _j^l(z)|}{2\Delta _j(t)}\leq \frac{C\Delta _j(t)}{2}\rightarrow 0$	
	\end{center}
and then $|\partial _{\bar{z}}Q_{j,t}^l|\rightarrow 0$, $|\partial _zQ_{j,t}^l|\rightarrow |c_j^l|\not =0$ as $t\rightarrow \infty $.
Hence, for sufficiently large $t$, $\jac Q_{j,t}^l=|\partial _zQ_{j,t}^l|^2-|\partial _{\bar{z}}Q_{j,t}^l|^2>0$, and we conclude that $Q_{j,t}^l$ is a local $C^1$-diffeomorphism.
We denote by $D$ the closed set whose fundamental group is $\pi _1(D)=\mathbb Z$ and its boundary components are $Q_{j,t}^l(\{ |z|=\Delta _j(t)\} )=\{ |w|=|c_j^l|\Delta _j(t)\}$ and $Q_{j,t}^l(\{ |z|=2\Delta _j(t)\} )=h_j^l(\{ |z|=2\Delta _j(t)\} )$.
Since $Q_{j,t}^l$ is a local $C^1$-diffeomorphism, we have $Q_{j,t}^l(\{ \Delta _j(t)\leq |z|\leq 2\Delta _j(t)\})=D$.
Furthermore, by the compactness of $\{ \Delta _j(t)\leq |z|\leq 2\Delta _j(t)\}$, $Q_{j,t}^l$ is proper.
Then we can regard the mapping $Q_{j,t}^l:\{ \Delta _j(t)\leq |z|\leq 2\Delta _j(t)\}\rightarrow D$ as a covering.
Let $Q_{j,t*}^l:\pi _1(\{ \Delta _j(t)\leq |z|\leq 2\Delta _j(t)\} )\rightarrow \pi _1(D)$ be the group homomorphism induced by $Q_{j,t}^l$.
We see that $Q_{j,t*}^l(\pi _1(\{ \Delta _j(t)\leq |z|\leq 2\Delta _j(t)\} ))=\mathbb Z\triangleleft \pi _1(D)$ because $Q_{j,t}^l(z)=c_j^lz$ on $|z|=\Delta _j(t)$.
Then, the covering $Q_{j,t}^l$ is regular, and its covering transformation group is $\mathbb Z/\mathbb Z=1$.
Therefore, we conclude that $Q_{j,t}^l$ is a $C^1$-diffeomorphism.
By the partial derivatives of $Q_{j,t}^l$, for sufficiently large $t$, it is a quasiconformal mapping and satisfies $K(Q_{j,t}^l)\rightarrow 1$ as $t\rightarrow \infty $.

(iii) In $2\Delta _j(t)\leq |z|<1$, $F_{j,t}^l(z)=h_j^l(z)$ and $K(h_j^l)\leq K$.

By the above discussions, for sufficiently large $t$, we obtain a quasiconformal mapping $F_{j,t}^l$ such that
	\begin{eqnarray}
	K(F_{j,t}^l)=\max \{ K(P_{j,t}^l),K(Q_{j,t}^l),K(h_j^l)\} &\rightarrow &\max \left\{\frac{M_j-M_j^{X_j}}{1-M_j^{X_j}},K(h_j^l)\right\}\nonumber \\
	&<&\max \left\{M_j,K\right\}+\varepsilon '\nonumber
	\end{eqnarray}
as $t\rightarrow \infty $.

In the cases of $M_j<1$, $M_j=1$, we also have
	\begin{center}
	$\displaystyle \lim _{t\rightarrow \infty }K(F_{j,t}^l)<\max \left\{\frac{1}{M_j},K\right\}+\varepsilon '$
	\end{center}
and
	\begin{center}
	$\displaystyle \lim _{t\rightarrow \infty }K(F_{j,t}^l)=K$
	\end{center}
by similar arguments.

Thus, for sufficiently large $t$, we can construct the quasiconformal mapping $F_t:Y_t\rightarrow Y'_t$ by gluing $\{F_{j,t}^l\} _{j=1,\cdots ,k}^{l=1,2}$.
We obtain the inequality
	\begin{center}
	$\displaystyle \lim _{t\rightarrow \infty }K(F_t)<\max \left\{\max _{j=1,\cdots ,k}\left\{\frac{m'_j}{m_j},\frac{m_j}{m'_j}\right\},K\right\}+\varepsilon '$.
	\end{center}
Next, we confirm that $F_t$ is homotopic to $(g'_t\circ f')\circ (g_t\circ f)^{-1}$.
In any case, each $h_j^l$ is homotopic to $(g'_t\circ f')\circ \alpha \circ (g_t\circ f)^{-1}$ in $\{2\Delta _j(t)<|z|<1\}$.
Each $Q_{j,t}^l$ satisfies $K(Q_{j,t}^l)\rightarrow 1$ as $t\rightarrow \infty $ and the domain $\{\Delta _j(t)<|z|<2\Delta _j(t)\}$ has the constant modulus for any $t$.
Finally, each $P_{j,t}^l$ produces a twist of angle $\arg c_j^l$ in $\{\delta _j(t)<|z|<\Delta _j(t)\}$ and satisfies $|\arg c_j^1+\arg c_j^2|<2\pi $.
Therefore, for sufficiently large $t$, the mapping $F_t$ is homotopic to $(g'_t\circ f')\circ \alpha \circ (g_t\circ f)^{-1}$.
Since $\alpha $ is homotopic to the identity on $X$, we are done.
We conclude that
	\begin{center}
	$\displaystyle \limsup _{t\rightarrow \infty }d_{T(X)}(r(t),r'(t))\leq \max \left\{\frac{1}{2}\log \max _{j=1,\cdots ,k}\left\{\frac{m'_j}{m_j},\frac{m_j}{m'_j}\right\}, d_{\hat T(X)}(r(\infty ),r'(\infty ))\right\}$.
	\end{center}

For the lower estimate, we can use the following inequality.
\begin{prop}{\rm (\cite{Amano14})}
We have
	\begin{center}
	$\displaystyle \liminf _{t\rightarrow \infty }d_{T(X)}(r(t),r'(t))\geq \frac{1}{2}\log \max _{j=1,\cdots ,k}\left\{\frac{m'_j}{m_j},\frac{m_j}{m'_j}\right\}$.
	\end{center}
\end{prop}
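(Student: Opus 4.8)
The plan is to obtain the lower bound one curve at a time, using the quasi-invariance of extremal length under quasiconformal maps together with the explicit cylinder structure of the Jenkins--Strebel surfaces $Y_t$, $Y'_t$ recalled in \S 2.5. Fix $j\in\{1,\dots,k\}$. For any quasiconformal $h:Y_t\to Y'_t$ with $h\circ(g_t\circ f)$ homotopic to $g'_t\circ f'$, the marking carries the class of $\gamma _j$ on $Y_t$ to that of $\gamma _j$ on $Y'_t$, so the standard distortion estimate for extremal length gives $K(h)^{-1}\ext_{r(t)}(\gamma _j)\le \ext_{r'(t)}(\gamma _j)\le K(h)\,\ext_{r(t)}(\gamma _j)$. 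Taking the infimum over all admissible $h$ and recalling $\inf_h K(h)=e^{2d_{T(X)}(r(t),r'(t))}$, I would deduce
\[
 d_{T(X)}(r(t),r'(t))\ \ge\ \frac12\left|\log\frac{\ext_{r(t)}(\gamma _j)}{\ext_{r'(t)}(\gamma _j)}\right|,
\]
so that the whole problem reduces to pinning down the asymptotics of $\ext_{r(t)}(\gamma _j)$ and $\ext_{r'(t)}(\gamma _j)$ as $t\to\infty$.

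For the extremal lengths I would use the description of $Y_t$ as a union of half-annuli. In $Y_t$ the curve $\gamma _j$ is the core of the embedded cylinder of modulus $e^{2t}m_j$ obtained by gluing $A_j^1(t)$ and $A_j^2(t)$; restricting the flat $|q_t|$-metric to this cylinder, equivalently using that the cylinder is a competitor annulus for the homotopy class, gives at once the upper bound $\ext_{r(t)}(\gamma _j)\le 1/(e^{2t}m_j)$, and likewise $\ext_{r'(t)}(\gamma _j)\le 1/(e^{2t}m'_j)$. The real content is the matching lower bound, i.e.\ an \emph{upper} bound on the modulus $\mathrm{Mod}_{Y_t}(\gamma _j)$ of the largest embedded annulus homotopic to $\gamma _j$: I would show $\mathrm{Mod}_{Y_t}(\gamma _j)=e^{2t}m_j+O(1)$, whence $\ext_{r(t)}(\gamma _j)=(1+o(1))/(e^{2t}m_j)$ and similarly for $Y'_t$. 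Substituting these into the displayed inequality and letting $t\to\infty$ yields $\liminf_t d_{T(X)}(r(t),r'(t))\ge \frac12\left|\log(m'_j/m_j)\right|=\frac12\log\max\{m'_j/m_j,\,m_j/m'_j\}$, and maximizing over $j$ gives the Proposition.

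The step I expect to be the main obstacle is precisely the modulus upper bound $\mathrm{Mod}_{Y_t}(\gamma _j)\le e^{2t}m_j+O(1)$. A priori a wide annulus homotopic to $\gamma _j$ could try to grow by absorbing the neighboring cylinders, and since those have moduli of the same order $e^{2t}m_i$, an unchecked gain would spoil the leading asymptotics and hence the ratio. The fact to exploit is that $\gamma _1,\dots,\gamma _k$ are disjoint and pairwise non-homotopic, so any annulus representing $\gamma _j$ must stay inside the $j$-th cylinder together with a neighborhood of the critical graph $\Gamma _q$ that it may cross, and cannot swallow a complementary cylinder; since the combinatorics of $\Gamma _q$ and the heights of the cylinders are fixed along the ray, the extra modulus accumulated in these collars is bounded independently of $t$, which produces the $O(1)$ error. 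A clean way to package this is to pass to the limit $r(\infty)\in\hat T(X)$: near the node corresponding to $\gamma _j$ the rescaled surfaces converge to a fixed nodal model, and the maximal annulus around the node is controlled by this model up to the bounded collar contribution. With this uniform control the ratio of extremal lengths tends to $m'_j/m_j$, and the lower estimate follows.
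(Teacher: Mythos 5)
The paper itself offers no proof of this proposition: it is imported verbatim from \cite{Amano14}, so there is nothing internal to compare against line by line. Your skeleton is the expected route (and, so far as the present paper indicates, the same kind of argument as in the cited source): the quasiconformal distortion inequality for extremal length, applied to the marked classes of $\gamma _j$, correctly yields $d_{T(X)}(r(t),r'(t))\geq \frac{1}{2}\left|\log \bigl(\ext _{r(t)}(\gamma _j)/\ext _{r'(t)}(\gamma _j)\bigr)\right|$, and the upper bound $\ext _{r(t)}(\gamma _j)\leq 1/(e^{2t}m_j)$ from the embedded cylinder $A_j^1(t)\cup A_j^2(t)$ is immediate. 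Granting the asymptotic $e^{2t}\ext _{r(t)}(\gamma _j)\rightarrow 1/m_j$, the conclusion follows exactly as you say.

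The genuine gap is the step you yourself flag, and the justification you sketch for it does not hold as stated. It is not true that an embedded annulus homotopic to $\gamma _j$ ``must stay inside the $j$-th cylinder together with a neighborhood of the critical graph'': the extremal annulus does penetrate the neighboring cylinders; what is true is only that it cannot contain a full circle leaf of a cylinder $i\not =j$, since such a leaf would be essential in the annulus and would force $\gamma _i$ to be homotopic to $\gamma _j$. Moreover the obvious quantitative tool is insufficient: the length--area inequality in the flat metric $|q_t|$ (total area $1$ along the ray) gives $e^{-2t}a_j^2\leq \ext _{r(t)}(\gamma _j)\leq e^{-2t}a_j/b_j$, where $a_j$, $b_j$ are the circumference and height of the $j$-th cylinder at $t=0$; the two sides differ by the multiplicative constant $1/(a_jb_j)$ uniformly in $t$, and a multiplicative error survives in the limit of the ratio $\ext _{r(t)}(\gamma _j)/\ext _{r'(t)}(\gamma _j)$, weakening the final bound by an additive constant. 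What is needed is precisely the additive statement $\mathrm{Mod}_{Y_t}(\gamma _j)\leq e^{2t}m_j+O(1)$, i.e.\ that the modulus gained outside the cylinder is bounded; this requires a real argument (for instance Masur's analysis of maximal annuli along Jenkins--Strebel rays in \cite{Masur75}, or continuity of suitably normalized extremal length at the nodal limit in $\hat T(X)$), and your appeal to the convergence $r(t)\rightarrow r(\infty )$ merely restates the needed uniform control rather than supplying it. If you prove or properly cite that single asymptotic, the rest of your argument is complete.
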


Furthermore, we use the following fact.

\begin{prop}{\rm (\cite{Masur75})}
We have
	\begin{center}
	$\displaystyle \liminf _{t\rightarrow \infty }d_{T(X)}(r(t),r'(t))\geq d_{\hat T(X)}(r(\infty ),r'(\infty ))$.
	\end{center}
\end{prop}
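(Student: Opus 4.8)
The plan is to establish the proposition as a lower-semicontinuity property of the Teichm\"uller distance along the degeneration $r(t)\to r(\infty)$, $r'(t)\to r'(\infty)$ in $\hat T(X)$. Put $K_\infty=\exp(2d_{\hat T(X)}(r(\infty),r'(\infty)))$; since each component map $h_\lambda:Y_{\infty,\lambda}\to Y'_{\infty,\lambda}$ in the prescribed homotopy class is extremal, the definition of $d_{\hat T(X)}$ gives $K_\infty=\max_{\lambda}K(h_\lambda)$. Writing $K=\liminf_{t\to\infty}\exp(2d_{T(X)}(r(t),r'(t)))$ and passing to a subsequence $t_n\to\infty$ that realizes this $\liminf$, it suffices to prove $K\geq K_\infty$, because $\liminf_{t\to\infty}d_{T(X)}(r(t),r'(t))=\tfrac12\log K$.

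For each $n$ let $F_n:Y_{t_n}\to Y'_{t_n}$ be a Teichm\"uller map homotopic to $(g'_{t_n}\circ f')\circ(g_{t_n}\circ f)^{-1}$, so $K(F_n)=\exp(2d_{T(X)}(r(t_n),r'(t_n)))\to K$. The first step is to transport the $F_n$ onto a single pair of surfaces. Because $r(t_n)\to r(\infty)$ and $r'(t_n)\to r'(\infty)$ in $\hat T(X)$, the comparison maps $g_\infty\circ g_{t_n}^{-1}:Y_{t_n}\to Y_\infty$ and $g'_\infty\circ(g'_{t_n})^{-1}:Y'_{t_n}\to Y'_\infty$ can be chosen $(1+o(1))$-quasiconformal on any compact set disjoint from the nodes. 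Setting $\tilde F_n:=(g'_\infty\circ(g'_{t_n})^{-1})\circ F_n\circ(g_\infty\circ g_{t_n}^{-1})^{-1}:Y_\infty\to Y'_\infty$, a direct homotopy computation (using that $\alpha$ is homotopic to the identity) shows that $\tilde F_n$ is homotopic to $(g'_\infty\circ f')\circ\alpha\circ(g_\infty\circ f)^{-1}$, and on every compact subset of a fixed component $Y_{\infty,\lambda}$ avoiding the nodes the restriction of $\tilde F_n$ is $(K+o(1))$-quasiconformal with image in $Y'_{\infty,\lambda}$.

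Next I would extract a limit. Fix a component $Y_{\infty,\lambda}$ and a compact exhaustion $E_1\subset E_2\subset\cdots$ of $Y_{\infty,\lambda}$ avoiding the nodes. On each $E_m$ the maps $\tilde F_n$ are uniformly quasiconformal, hence form a normal family; passing to a further subsequence and diagonalizing over $m$, they converge locally uniformly to a $K$-quasiconformal map $\tilde h_\lambda:Y_{\infty,\lambda}\to Y'_{\infty,\lambda}$. The limit is nondegenerate, since a fixed simple closed curve $\gamma$ in the interior of $Y_{\infty,\lambda}$ has positive extremal length while its $\tilde F_n$-images have uniformly bounded extremal length, so the limit cannot crush the component to a point; thus $\tilde h_\lambda$ is a genuine $K$-quasiconformal homeomorphism onto $Y'_{\infty,\lambda}$. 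Crucially, $\tilde h_\lambda$ lies in the homotopy class of $(g'_\infty\circ f')\circ\alpha\circ(g_\infty\circ f)^{-1}$: the $\tilde F_n$ all lie in this class, they converge locally uniformly, and the mapping class group of $Y_{\infty,\lambda}$ is discrete, so the class is preserved in the limit, exactly as in the final step of the proof of Lemma \ref{lemma}. Consequently $K\geq K(\tilde h_\lambda)\geq K(h_\lambda)$ for every $\lambda$, and taking the maximum over $\lambda$ gives $K\geq\max_\lambda K(h_\lambda)=K_\infty$, as required.

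The step I expect to be the main obstacle is the control near the nodes: one must rule out degeneration of any limiting component and, more delicately, guarantee that no homotopy information—such as a Dehn twist about a vanishing curve $\gamma_j$—is created or destroyed in the passage from $Y_{t_n}$ to $Y_\infty$. I would confine all estimates to compact sets bounded away from the pinching annuli and lean on the discreteness of the mapping class groups of the pieces together with a twist bound of the type $|\arg c_j^1+\arg c_j^2|<2\pi$ already exploited in the upper estimate. As a cleaner alternative avoiding normal families, one could invoke Kerckhoff's formula $d_{T(X)}(p,q)=\tfrac12\log\sup_\gamma \ext_q(\gamma)/\ext_p(\gamma)$: for each fixed $\gamma$ supported in a single component the extremal lengths $\ext_{r(t)}(\gamma)$ and $\ext_{r'(t)}(\gamma)$ converge to those computed on $Y_{\infty,\lambda}$ and $Y'_{\infty,\lambda}$, so restricting the supremum to such curves yields $\liminf_t d_{T(X)}(r(t),r'(t))\geq\tfrac12\log\sup_{\gamma\subset Y_{\infty,\lambda}}\ext_{Y'_{\infty,\lambda}}(\gamma)/\ext_{Y_{\infty,\lambda}}(\gamma)$, and maximizing over $\lambda$ recovers $d_{\hat T(X)}(r(\infty),r'(\infty))$.
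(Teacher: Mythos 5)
The paper does not actually prove this proposition; it is imported wholesale from Masur's 1975 paper, so there is no internal proof to match yours against. What you have written is, in outline, a reconstruction of the standard (essentially Masur's) argument: take near-extremal maps $F_n:Y_{t_n}\to Y'_{t_n}$ along a subsequence realizing the liminf, transport them to the nodal limit surfaces by comparison maps that are nearly conformal away from the nodes, extract a locally uniform limit on each component by normality of uniformly quasiconformal families, verify the limit is a nondegenerate homeomorphism in the homotopy class of $(g'_\infty\circ f')\circ\alpha\circ(g_\infty\circ f)^{-1}$, and then invoke extremality of $h_\lambda$ on each finite-type piece to get $K\geq\max_\lambda K(h_\lambda)=K_\infty$. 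That architecture is correct and is exactly the right tool here; the closing appeal to discreteness of the mapping class groups of the pieces is the same device the paper uses at the end of its Lemma 3.1.

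Two points need repair or honesty, though. First, your claim that $g_\infty\circ g_{t_n}^{-1}$ is $(1+o(1))$-quasiconformal on compacta away from the nodes is false for the paper's explicit maps: in the annulus coordinates, $g_\infty\circ g_{t}^{-1}$ acts on $A_j^l(t)$ by $\rho e^{i\theta}\mapsto h_j(\rho^{e^{-2t}})e^{i\theta}$, and in logarithmic coordinates its radial derivative carries a factor $e^{-2t}$, so its dilatation on the preimage of any fixed compact set blows up like $e^{2t}$ (it squeezes an annulus of modulus growing like $e^{2t}$ onto one of fixed modulus). What is true, and what you should use, is that convergence $r(t_n)\to r(\infty)$ in $\hat T(X)$ furnishes deformations $Y_{t_n}\to Y_\infty$ \emph{homotopic} to $g_\infty\circ g_{t_n}^{-1}$ that are the identity (hence conformal) on the fixed collars $\{\delta\leq|z|<1\}$ and crush only the inner subannuli toward the node; this is precisely the definition of the topology on $\hat T(X)$ together with the cited convergence proposition, so the step is fixable but as stated it would fail. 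Second, your normal-families step shows the limit is a nonconstant $K$-quasiconformal embedding, but surjectivity onto $Y'_{\infty,\lambda}$ is not automatic from the extremal-length remark alone; the clean fix is to run the identical argument on the inverse maps $F_n^{-1}$ (Teichm\"uller maps with the same dilatation) and compose, or to argue properness directly. Finally, the "cleaner alternative" via Kerckhoff's formula is not really cheaper: the convergence $\ext_{r(t)}(\gamma)\to\ext_{Y_{\infty,\lambda}}(\gamma)$ for curves missing the pinching locus is the continuity of extremal length on augmented Teichm\"uller space, a genuine theorem (due to Liu--Su, long after Masur) that is neither in this paper nor elementary, so that route outsources rather than removes the difficulty.
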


Combining the above two inequalities, we obtain the inequality
	\begin{center}
	$\displaystyle \liminf _{t\rightarrow \infty }d_{T(X)}(r(t),r'(t))\geq \max \left\{\frac{1}{2}\log \max _{j=1,\cdots ,k}\left\{\frac{m'_j}{m_j},\frac{m_j}{m'_j}\right\}, d_{\hat T(X)}(r(\infty ),r'(\infty ))\right\}$.
	\end{center}
\end{proof}

\setcounter{section}{1}
\begin{cor*}
If $r$, $r'$ are similar, the minimum value of the equation {\rm (\ref{eq})} when we shift the starting points of $r$, $r'$ is given by
	\begin{eqnarray}
	\max \left\{\frac{1}{2}\delta , d_{\hat T(X)}(r(\infty ),r'(\infty ))\right\},\nonumber
	\end{eqnarray}
where $\displaystyle \delta =\frac{1}{2}\log \max _{j=1,\cdots ,k}\frac{m'_j}{m_j}+\frac{1}{2}\log \max _{j=1,\cdots ,k}\frac{m_j}{m'_j}$.
\end{cor*}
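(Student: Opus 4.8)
The plan is to apply Theorem \ref{main} to the rays obtained by sliding the base points along the two geodesics, and then to perform an elementary one–variable minimization. First I would record how the data on the right–hand side of (\ref{eq}) transform under a shift. Sliding the base point of $r$ forward by $a\ge 0$ replaces $r$ by the reparametrized ray $\tilde r(t)=r(t+a)$; this is the same Jenkins–Strebel geodesic determined by $q$, so it has the same terminal point $r(\infty )$ in $\hat T(X)$. From the normalization $A_j^l(t)=\{e^{-e^{2t}m_j\pi }\le |z|<1\}$ used in the construction of the end points, the modulus of the $j$-th annulus at parameter $t$ is $e^{2t}m_j$; hence the starting moduli of $\tilde r$ are $e^{2a}m_j$. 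Likewise, shifting $r'$ by $b\ge 0$ gives starting moduli $e^{2b}m'_j$ and leaves $r'(\infty )$ fixed, and $\tilde r,\tilde r'$ remain similar with respect to the same curves $\gamma _1,\cdots ,\gamma _k$.

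Next I would invoke Theorem \ref{main} for $\tilde r,\tilde r'$. Because $d_{\hat T(X)}(r(\infty ),r'(\infty ))$ is untouched by the shift, the asymptotic distance of the shifted rays is
\[
\max\left\{\frac{1}{2}\log\max_{j=1,\cdots ,k}\left\{e^{2(b-a)}\frac{m'_j}{m_j},\,e^{-2(b-a)}\frac{m_j}{m'_j}\right\},\ d_{\hat T(X)}(r(\infty ),r'(\infty ))\right\}.
\]
Only the combination $c:=b-a$ appears, and $c$ sweeps out all of $\mathbb R$ as $a,b$ range over $[0,\infty )$. Writing $A=\max_j m'_j/m_j$ and $B=\max_j m_j/m'_j$, the inner maximum equals $\max\{e^{2c}A,\,e^{-2c}B\}$, so the first entry of the outer maximum is $\frac12\log\max\{e^{2c}A,e^{-2c}B\}$.

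The heart of the argument is to minimize this first entry over $c$. Since $e^{2c}A$ is increasing and $e^{-2c}B$ is decreasing in $c$, their maximum is smallest at the balance point $e^{4c}=B/A$, where both equal $(AB)^{1/2}$; therefore
\[
\min_{c\in\mathbb R}\tfrac12\log\max\{e^{2c}A,e^{-2c}B\}=\tfrac14\log(AB)=\tfrac12\Big(\tfrac12\log A+\tfrac12\log B\Big)=\tfrac12\delta .
\]
As $c$ varies, the first entry of the outer maximum thus takes every value in $[\tfrac12\delta ,\infty )$ while the second entry stays equal to $d_{\hat T(X)}(r(\infty ),r'(\infty ))$. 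Minimizing the outer maximum then gives $\max\{\tfrac12\delta ,\,d_{\hat T(X)}(r(\infty ),r'(\infty ))\}$: if the detour term dominates $\tfrac12\delta$, a suitable shift pushes the first entry down to it and the value equals the detour term, and otherwise the value is $\tfrac12\delta$. Since the optimal $c$ is finite and realizable with $a,b\ge 0$, this minimum is attained, which is the assertion of the corollary.

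The only substantive step is the bookkeeping in the first paragraph—verifying that a shift of the base point scales every modulus by a common factor $e^{2a}$ (resp.\ $e^{2b}$) and leaves the terminal nodal surface fixed—after which everything collapses to the elementary fact that $\max\{e^{2c}A,e^{-2c}B\}$ is minimized at its balance point. I expect no genuine obstacle here; one only has to read off the modulus–scaling from the explicit annulus coordinates and note that $\delta=\log(AB)^{1/2}$, so that the balanced value $\tfrac14\log(AB)$ is precisely $\tfrac12\delta $.
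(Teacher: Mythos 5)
Your proposal is correct and takes essentially the same route as the paper: both apply Theorem \ref{main} to shifted rays (whose starting moduli scale by a common exponential factor while $\tfrac{1}{2}\delta$ and $d_{\hat T(X)}(r(\infty ),r'(\infty ))$ are shift-invariant) and minimize the first entry of the maximum at the balance point, your condition $e^{4c}=B/A$ being exactly the paper's choice $\alpha =\frac{1}{4}\log \left(\max _{j}\frac{m_j}{m'_j}\big/\max _{j}\frac{m'_j}{m_j}\right)$. Your extra bookkeeping (the two-sided shift with $a,b\geq 0$ and the explicit check that the balanced value equals $\tfrac{1}{2}\delta$) only spells out details the paper leaves implicit.
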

\setcounter{section}{3}

\begin{proof}[Proof of Corollary \ref{cor}]
We see that
	\begin{center}
	$\displaystyle \frac{1}{2}\log \max _{j=1,\cdots ,k}\left\{\frac{m'_j}{m_j},\frac{m_j}{m'_j}\right\}\geq \frac{1}{2}\delta $.
	\end{center}
The values $\frac{1}{2}\delta $ and $d_{\hat T(X)}(r(\infty ),r'(\infty ))$ are invariant when we shift the starting points of the rays $r$, $r'$.
Hence, by Theorem \ref{main}, 
	\begin{center}
	$\displaystyle \lim _{t\rightarrow \infty }d_{T(X)}(r(t),r'(t+\alpha ))\geq \max \left\{\frac{1}{2}\delta , d_{\hat T(X)}(r(\infty ),r'(\infty ))\right\}$
	\end{center}
for any $\alpha \in \mathbb R$.
The equality holds if
	\begin{center}
	$\displaystyle \alpha =\frac{1}{4}\log \frac{\displaystyle \max _{j=1,\cdots ,k}\frac{m_j}{m'_j}}{\displaystyle \max _{j=1,\cdots ,k}\frac{m'_j}{m_j}}$.
	\end{center}
Indeed, we calculate that
	\begin{eqnarray}
	\max _{j=1,\cdots ,k}\frac{e^{2\alpha }m'_j}{m_j}=\max _{j=1,\cdots ,k}\left\{\frac{\displaystyle \sqrt{\max _{j=1,\cdots ,k}\frac{\mathstrut m_j}{m'_j}}\cdot m'_j}{\displaystyle \sqrt{\max _{j=1,\cdots ,k}\frac{m'_j}{m_j}}\cdot m_j}\right\}&=&\sqrt{\max _{j=1,\cdots ,k}\frac{m'_j}{m_j}}\cdot \sqrt{\max _{j=1,\cdots ,k}\frac{\mathstrut m_j}{m'_j}}\nonumber \\
	&=&\max _{j=1,\cdots ,k}\frac{m_j}{e^{2\alpha }m'_j}.\nonumber
	\end{eqnarray}
Therefore, we conclude that
	\begin{eqnarray}
	&&\lim _{t\rightarrow \infty }d_{T(X)}(r(t),r'(t+\alpha ))\nonumber \\
	&=&\max \left\{\frac{1}{2}\log \max _{j=1,\cdots ,k}\left\{\frac{e^{2\alpha }m'_j}{m_j},\frac{m_j}{e^{2\alpha }m'_j}\right\}, d_{\hat T(X)}(r(\infty ),r'(\infty ))\right\}\nonumber \\
	&=&\max \left\{\frac{1}{2}\left(\frac{1}{2}\log \max _{j=1,\cdots ,k}\frac{m'_j}{m_j}+\frac{1}{2}\log \max _{j=1,\cdots ,k}\frac{m_j}{m'_j}\right), d_{\hat T(X)}(r(\infty ),r'(\infty ))\right\}\nonumber \\
	&=&\max \left\{\frac{1}{2}\delta , d_{\hat T(X)}(r(\infty ),r'(\infty ))\right\}.\nonumber
	\end{eqnarray}
\end{proof}

\section*{Acknowledgements}
I would like to express the deepest appreciation to Professor Hiroshige Shiga for his insightful comments and suggestions.
This work is supported by Global COE Program ``Computationism as a Foundation for the Sciences".

\bibliographystyle{alpha}
\bibliography{references}

\end{document}